\newtheorem{theorem}{Theorem}
\newtheorem{definition}[theorem]{Definition}
\newtheorem{lemma}[theorem]{Lemma}
\newtheorem{remark}[theorem]{Remark}
\newenvironment{proof}[1][Proof]{\noindent\textbf{#1.} }{\ \rule{0.5em}{0.5em}}
\begin{document}

\title{Finite-approximate controllability of evolution systems via resolvent-like operators}
\author{N. I. Mahmudov\\Department of Mathematics, Eastern Mediterranean University,\\
 Gazimagusa, T.R. North Cyprus, via
Mersin 10, Turkey \\email: nazim.mahmudov@emu.edu.tr}
\date{5 Jan 2018}
\maketitle

\begin{abstract}
In this work we extend a variational method to study the approximate
controllability and finite dimensional exact controllability (
finite-approximate controllability) for the semilinear evolution equations in
Hilbert spaces. We state a useful characterization of the finite-approximate
controllability for linear evolution equation in terms of resolvent-like
operators. We also find a control so that, in addition to the approximate
controllability requirement, it ensures finite dimensional exact
controllability. Assuming the approximate controllability of the corresponding
linearized equation we obtain sufficient conditions for the finite-approximate
controllability of the semilinear evolution equation under natural conditions.
The obtained results are generalization and continuation of the recent results
on this issue. Applications to heat equations are treated.

\end{abstract}

\section{Introduction}

Controllability is one of the basic qualitative concepts in modern
mathematical control theory that play an important role in deterministic and
stochastic control theory. From mathematical point of view, exact and
approximate controllability problems should be distinguished. Exact
controllability enables to steer the system to an arbitrary final state while
approximate controllability means that the system can be steered to an
arbitrary small neighborhood of final state, and very often approximate
controllability is completely adequate in applications. If the semigroup
associated with the system is compact, the controllability operator is also
compact, and therefore the inverse fails to exist. Hence, the concept of exact
controllability is very strong and feasibility is limited; approximately
controllability is a weaker concept that is entirely appropriate in practice.
We would like to mention some interesting works: Triggiani \cite{trig1},
\cite{trig2}, Bashirov and Mahmudov \cite{mah1}, Yamamoto and Park \cite{yam},
Naito \cite{naito}, Zhou \cite{zhou1}, \cite{zhou2}, Seidman \cite{seid}, Li
and Yong \cite{yong2}, Mahmudov \cite{mah2}, \cite{mah4}. Also, there are many
papers on the approximate controllability of the various types of nonlinear
systems under different conditions (see \cite{mah1}-\cite{yong2} and
references therein).

In this paper we will study a stronger version of controllability concept that
is referred to as the finite-approximate controllability problem. It should be
stressed out that in the context of abstract linear control systems,
finite-approximate controllability problem is a consequence of approximate
one, see \cite{lions}. So these two concepts are equivalent. However in the
nonlinear context they are not equivalent, see \cite{zuazua1}. Recently
finite-approximate controllability result for abstract semilinear evolution
equations with compact $C_{0}$-semigroup is presented in \cite{mah3}.

In this paper, we investigate simultaneous approximate and finite-dimensional
exact controllability (finite-approximate controllability) of the following
semilinear evolution system :%

\begin{equation}
\left\{
\begin{array}
[c]{c}%
y^{\prime}\left(  t\right)  =Ay\left(  t\right)  +Bu\left(  t\right)
+f\left(  t,y\left(  t\right)  \right)  +g\left(  t,y\left(  t\right)
\right)  ,\ \ t\in\left[  0,T\right]  ,\\
y\left(  0\right)  =y_{0},
\end{array}
\right.  \label{mp1}%
\end{equation}
where the state variable $y\left(  \cdot\right)  $ takes values in the Hilbert
space $\mathfrak{X}$, $A:D\left(  A\right)  \subset\mathfrak{X}\rightarrow
\mathfrak{X}$ is a family of closed and bounded linear operators generating a
strongly continuous semigroup $\mathfrak{U}:\left[  0,T\right]  \rightarrow
L\left(  \mathfrak{X}\right)  $, where the domain $D\left(  A\right)
\subset\mathfrak{X}$ which is dense in $\mathfrak{X},$ the control function
$u\left(  \cdot\right)  $ is given in $L^{2}\left(  \left[  0,T\right]
,U\right)  ,$ $U$ is a Hilbert space, $B$ is a bounded linear operator from
$U$ into $\mathfrak{X},$ $f,g:\left[  0,T\right]  \times\mathfrak{X}%
\rightarrow\mathfrak{X}$ are given functions satisfying some assumptions
specified later and $y_{0}$ is an element of the Hilbert space $\mathfrak{X}$.

We present the following definition of mild solutions of system (\ref{mp1}).

\begin{definition}
\label{def:mildsol} $y\in C\left(  \left[  0,T\right]  ,\mathfrak{X}\right)  $
is called a mild solution of (\ref{mp1}) if%
\begin{equation}
y\left(  t\right)  =\mathfrak{U}\left(  t\right)  x_{0}+\int_{0}%
^{t}\mathfrak{U}\left(  t-s\right)  \left[  Bu\left(  s\right)  +f\left(
s,y\left(  s\right)  \right)  +g\left(  s,y\left(  s\right)  \right)  \right]
ds,\ \ t\in\left[  0,T\right]  . \label{mild}%
\end{equation}

\end{definition}

Following \cite{curtain}, we define the controllability concepts and
controllability operator for the system (\ref{mp1}).

\begin{definition}
\label{def:1}For the system (\ref{mp1}), we define the following concepts:
\newline(a) A controllability operator is the bounded linear operator
$B_{0}^{T}:L^{2}\left(  \left[  0,T\right]  ,U\right)  $\newline%
$\rightarrow\mathfrak{X}$ defined by%
\[
L_{0}^{T}u:=\int_{0}^{T}\mathfrak{U}\left(  T-s\right)  Bu\left(  s\right)
ds;
\]
\newline (b) Control system (\ref{mp1}) is approximately controllable on
$\left[  0,T\right]  ,$ if for every $y_{0},y_{f}\in\mathfrak{X}$, and for
every $\varepsilon>0$, there exists a control $u\in L^{2}\left(  \left[
0,T\right]  ,U\right)  $ such that the mild solution $y$ of the Cauchy problem
(\ref{mp1}) satisfies $y\left(  0\right)  =y_{0}$ and $\left\Vert y\left(
T\right)  -y_{f}\right\Vert <\varepsilon$. \newline(c)\ Let $M$ be a finite
dimensional subspace of $\mathfrak{X}$ and let us denote by $\pi_{M}$ the
orthogonal projection from $\mathfrak{X}$ into $M$. Control system (\ref{mp1})
is finite-approximately controllable on $\left[  0,T\right]  ,$ if for every
$y_{0},y_{f}\in\mathfrak{X}$, and for every $\varepsilon>0$, there exists a
control $u\in L^{2}\left(  \left[  0,T\right]  ,U\right)  $ such that the mild
solution $y$ of the Cauchy problem (\ref{mp1}) satisfies $y\left(  0\right)
=y_{0}$ and $\left\Vert y\left(  T\right)  -y_{f}\right\Vert <\varepsilon
\ $and $\pi_{M}y\left(  T\right)  =\pi_{M}y_{f}$. \newline(d) The
controllability Gramian is defined by%
\begin{equation}
\Gamma_{0}^{T}:=L_{0}^{T}\left(  L_{0}^{T}\right)  ^{\ast}=\int_{0}%
^{T}\mathfrak{U}\left(  T-s\right)  BB^{\ast}\mathfrak{U}^{\ast}\left(
T-s\right)  ds:\mathfrak{X}\rightarrow\mathfrak{X}.\nonumber
\end{equation}

\end{definition}

The rest of this paper is organized as follows. In section 2, we will present
some results on properties of positive linear compact operators depending on
parameter. We define a resolvent-like operators and give necessary and
sufficient conditions for finite-approximate controllability of linear
evolution equations. Section 3 is divided into two subsections. In subsection
3.1, using a control defined by resolvent-like operator we define a control
operator $\Theta_{\varepsilon}$ and show existence of fixed points. In
subsection 3.2 we prove our main result on finite-approximate controllability
of semilinear evolution system. Finally, we present two examples to
demonstrate our main results in section 4.

Several comments are in order:

(i) The variational approach developed in this paper is somewhat different
from those applied in the literature and provide another new method to prove
simultaneous approximate and exact finite-dimensional controllability for
(\ref{mp1}).

(ii) The proof of the main result obtained in this paper are based on quasi
linearization of semilinear problem and on viewing the finite-approximate
controllability problem as a limit of optimal control problems. It combines
the methods used in papers \cite{zuazua1}, \cite{mah4} and \cite{mah2}.

(iii) Requirement of exactly controlling the finite-dimensional projection
introduces new difficulties. To overcome it, we present criteria for
finite-approximate controllability of linear systems in terms of
resolvent-like operators, and study convergence properties of approximating
resolvent-like operators.

(iv) The variational approach developed here is constructive since
approximating control can be given explicitly. It is interesting both from the
theoretical and the numerical point of view.

(v) One may expect the results of this paper to hold for a class of problems
governed by different type of evolution systems such as Caputo fractional
differential equations (FDEs), Riemann-Liouville FDEs, stochastic FDEs,
Sobolev type FDEs and so on.

\section{Finite-approximate controllability of linear systems}

In the present section we investigate the finite-approximate controllability
of linear evolution system:%
\begin{equation}
\left\{
\begin{array}
[c]{c}%
y^{\prime}\left(  t\right)  =Ay\left(  t\right)  +Bu\left(  t\right)
,\ \ \ t\in\left[  0,T\right]  ,\\
y\left(  0\right)  =y_{0}.
\end{array}
\right.  \label{lp1}%
\end{equation}
Finite-approximate controllability concept was introduced in \cite{zuazua1}.
This property not only says that the distance between $y\left(  T\right)  $
and the target $y_{f}$ is small but also that the projections of $y\left(
T\right)  $ and $y_{f}$ over $M$ coincide.

It is known that the resolvent operator $\left(  \varepsilon I+\Gamma_{0}%
^{T}\right)  ^{-1}$ is useful in studying the controllability properties of
linear and semilinear systems, see \cite{mah1}, \cite{mah2}. In this respect,
we state a useful characterization of the finite-approximate controllability
for (\ref{lp1}) in terms of resolvent-like operator. We show that for the
linear evolution system (\ref{lp1}) approximate controllability on $\left[
0,T\right]  $ is equivalent to the finite-approximate controllability on
$\left[  0,T\right]  .$ Moreover, we present necessary and sufficient
conditions for the finite-approximate controllability of linear evolution
systems in Hilbert spaces in terms of resolvent-like operators.

Firstly, we present three results on the resolvent operators.

\begin{theorem}
\label{thm:1}Assume that $\Gamma\left(  \varepsilon\right)  ,\Gamma
:\mathfrak{X}\rightarrow\mathfrak{X},$ $\varepsilon>0$, are linear positive
operators such that
\[
\lim_{\varepsilon\rightarrow0^{+}}\left\Vert \Gamma\left(  \varepsilon\right)
h-\Gamma h\right\Vert =0,\ h\in\mathfrak{X}.
\]
Then for any sequence $\left\{  \varepsilon_{n}>0\right\}  $ converging to $0$
as $n\rightarrow\infty$, we have
\[
\lim_{n\rightarrow\infty}\left\Vert \varepsilon_{n}\left(  \varepsilon
_{n}I+\Gamma\left(  \varepsilon_{n}\right)  \right)  ^{-1}\pi_{M}\right\Vert
=0.
\]

\end{theorem}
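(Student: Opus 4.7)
The plan is to let $R_n := \varepsilon_n(\varepsilon_n I + \Gamma(\varepsilon_n))^{-1}$ and prove $\|R_n \pi_M\|\to 0$ in three stages: establish a uniform bound $\|R_n\|\le 1$, use finite-dimensionality of $M$ to reduce to pointwise convergence on $M$, and extract that pointwise convergence from the strong convergence $\Gamma(\varepsilon_n)\to\Gamma$ via a resolvent-identity argument.

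For the bound, since each $\Gamma(\varepsilon_n)$ is positive self-adjoint, the functional calculus yields $\|R_n\| = \sup_{\lambda\ge 0}\varepsilon_n/(\varepsilon_n+\lambda) \le 1$. Because $\pi_M$ has range in the finite-dimensional $M$,
\[
\|R_n\pi_M\| = \sup_{g\in M,\ \|g\|\le 1}\|R_n g\|,
\]
and the closed unit ball of $M$ is compact, so (combined with $\|R_n\|\le 1$) the desired operator-norm convergence $\|R_n\pi_M\|\to 0$ reduces to the pointwise statement $R_n g\to 0$ for each $g\in M$.

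The decisive identity is $(\varepsilon_n I + \Gamma(\varepsilon_n))^{-1}\Gamma(\varepsilon_n) = I - R_n$. Writing $\Gamma = (\Gamma - \Gamma(\varepsilon_n)) + \Gamma(\varepsilon_n)$ inside $R_n\Gamma z$ gives, for every $z\in\mathfrak{X}$,
\[
R_n\Gamma z = R_n\bigl(\Gamma - \Gamma(\varepsilon_n)\bigr)z + \varepsilon_n(I-R_n)z,
\]
hence $\|R_n\Gamma z\|\le\|(\Gamma-\Gamma(\varepsilon_n))z\| + 2\varepsilon_n\|z\|\to 0$ by the strong convergence hypothesis. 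Thus $R_n\to 0$ strongly on $\operatorname{range}(\Gamma)$, and, via a routine approximation step using the uniform bound $\|R_n\|\le 1$, strongly on $\overline{\operatorname{range}(\Gamma)} = (\ker\Gamma)^\perp$.

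I expect the main obstacle to be closing the gap between ``strong convergence on $\overline{\operatorname{range}(\Gamma)}$'' and ``strong convergence on all of $M$'': concretely, given $g\in M$ and $\delta>0$ one needs some $z$ with $\|g - \Gamma z\|<\delta$, i.e.\ one needs $M\subseteq(\ker\Gamma)^\perp$. This amounts to reading ``positive'' in the hypothesis as strictly positive (so $\ker\Gamma=\{0\}$ and $\overline{\operatorname{range}(\Gamma)}=\mathfrak{X}$), which is consistent with the intended application where $\Gamma$ will be the controllability Gramian and approximate controllability of the linearized system forces strict positivity. Under this interpretation, the bound $\|R_n g\|\le \|g-\Gamma z\| + \|R_n\Gamma z\|\le \delta + o(1)$ finishes the pointwise claim, and the second-stage compactness argument delivers the operator-norm conclusion $\|\varepsilon_n(\varepsilon_n I+\Gamma(\varepsilon_n))^{-1}\pi_M\|\to 0$.
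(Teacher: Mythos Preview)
Your argument is correct, and your reading of ``positive'' as strictly positive is exactly the paper's convention (see the clause ``$\Gamma z=0\Rightarrow z=0$'' in the paper's own proof, and the definition in Theorem~\ref{thm:4}(ii)). One small point: you justify $\|R_n\|\le 1$ via the spectral functional calculus, which presupposes self-adjointness; the paper only assumes positivity. The bound is still true, though, by the elementary estimate $\langle(\varepsilon_n I+\Gamma(\varepsilon_n))y,y\rangle\ge\varepsilon_n\|y\|^2$, so this does not affect the validity of your proof.

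The route is genuinely different from the paper's. The paper argues by contradiction/compactness in the \emph{image}: it picks near-extremal unit vectors $h_{n,m}$, sets $z_{n,m}=\varepsilon_n(\varepsilon_nI+\Gamma(\varepsilon_n))^{-1}\pi_Mh_{n,m}$, uses that these live in a finite-dimensional space to extract a strong limit $z$, and then reads off $\Gamma z=0$ from the limiting equation $\varepsilon_n\pi_Mh_n=\varepsilon_nz_n+\Gamma(\varepsilon_n)z_n$, forcing $z=0$. You instead prove the stronger intermediate fact that $R_n\to 0$ strongly on all of $\mathfrak X$ (via the resolvent identity $R_n\Gamma(\varepsilon_n)=\varepsilon_n(I-R_n)$ and density of $\operatorname{range}(\Gamma)$), and only then use compactness of the unit ball of $M$ in the \emph{domain} to upgrade to operator-norm convergence. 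Your approach yields a cleaner, reusable estimate $\|R_n\Gamma z\|\le\|(\Gamma-\Gamma(\varepsilon_n))z\|+2\varepsilon_n\|z\|$ and a result that does not depend on $M$ until the very last step; the paper's approach is more self-contained (no density argument needed) and exploits the finite-dimensional structure from the outset.
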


\begin{proof}
It is clear that $\left(  \varepsilon I+\Gamma\left(  \varepsilon\right)
\right)  ^{-1}\pi_{M}$ maps $\mathfrak{X}$ into finite dimensional space
$\operatorname{Im}\left(  \left(  \varepsilon I+\Gamma\left(  \varepsilon
\right)  \right)  ^{-1}\pi_{M}\right)  $ and%
\[
0\leq\left\Vert \varepsilon\left(  \varepsilon I+\Gamma\left(  \varepsilon
\right)  \right)  ^{-1}\pi_{M}\right\Vert \leq1.
\]
Then for any sequence $\left\{  \varepsilon_{n}>0\right\}  $ converging to $0$
as $n\rightarrow\infty,$ we have
\[
0\leq\rho:=\lim_{n\rightarrow\infty}\left\Vert \varepsilon_{n}\left(
\varepsilon_{n}I+\Gamma\left(  \varepsilon_{n}\right)  \right)  ^{-1}\pi
_{M}\right\Vert \leq1.
\]
Show that $\rho=0.$ Let $\left\Vert \varepsilon_{n}\left(  \varepsilon
_{n}I+\Gamma\left(  \varepsilon_{n}\right)  \right)  ^{-1}\pi_{M}\right\Vert
:=\gamma_{n}$. $\ $Then $0\leq\lim_{n\rightarrow\infty}\gamma_{n}=\rho\leq1$
and by the definition of $\gamma_{n}$ there exists a sequence $\left\{
h_{n,m}\in\mathfrak{X}:\left\Vert h_{n,m}\right\Vert =1\right\}  $ such that
\begin{align*}
\varepsilon_{n}\left(  \varepsilon_{n}I+\Gamma\left(  \varepsilon_{n}\right)
\right)  ^{-1}\pi_{M}h_{n,m}  &  =:z_{n,m},\ \ \ \ \\
0  &  \leq\left\Vert z_{n,m}\right\Vert \leq1,\ \ \ \left\Vert z_{n,m}%
\right\Vert \rightarrow\gamma_{n}\ \ \text{as\ \ }m\rightarrow\infty.
\end{align*}
It follows that%
\begin{equation}
\varepsilon_{n}\pi_{M}h_{n,m}=\varepsilon_{n}z_{n,m}+\Gamma\left(
\varepsilon_{n}\right)  z_{n,m}. \label{f121}%
\end{equation}
Since $\left\{  \pi_{M}h_{n,m}\right\}  $ and $\left\{  z_{n,m}\right\}  $ are
bounded sequences of finite dimensional vectors, without loss of generality we
may assume that%
\[
z_{n,m}\rightarrow z_{n}\ \ \ \text{and\ }\pi_{M}h_{n,m}\rightarrow
h_{n}\ \text{strongly as }m\rightarrow\infty.
\]
Taking limit as $m\rightarrow\infty$ in (\ref{f121}), we get%
\begin{equation}
\varepsilon_{n}h_{n}=\varepsilon_{n}z_{n}+\Gamma\left(  \varepsilon
_{n}\right)  z_{n},\ \ \ \ \ \left\Vert h_{n}\right\Vert \leq1,\ \ 0\leq
\left\Vert z_{n}\right\Vert =\gamma_{n}\leq1. \label{ff11}%
\end{equation}
Next, having in mind that $z_{n}\rightarrow z$ along some subsequence, we take
limit as $n\rightarrow\infty$ $\left(  \ref{ff11}\right)  $ to get
\begin{gather*}
0=\lim_{n\rightarrow\infty}\Gamma\left(  \varepsilon_{n}\right)  z_{n}%
=\lim_{n\rightarrow\infty}\left(  \Gamma\left(  \varepsilon_{n}\right)
-\Gamma\right)  z+\lim_{n\rightarrow\infty}\Gamma\left(  \varepsilon
_{n}\right)  \left(  z_{n}-z\right)  +\Gamma z=\Gamma z=0,\\
\Gamma z=0\Longrightarrow z=0.
\end{gather*}
By definition of the positive operator $\Gamma z=0$ implies that $z=0$. Thus
\[
\rho=\lim_{n\rightarrow\infty}\left\Vert \varepsilon_{n}\left(  \varepsilon
_{n}I+\Gamma\left(  \varepsilon_{n}\right)  \right)  ^{-1}\pi_{M}\right\Vert
=\lim_{n\rightarrow\infty}\gamma_{n}=\lim_{n\rightarrow\infty}\left\Vert
z_{n}\right\Vert =\left\Vert z\right\Vert =0.
\]
The theorem is proved.
\end{proof}

\begin{theorem}
\label{thm:2}Assume that $\Gamma\left(  \varepsilon\right)  :\mathfrak{X}%
\rightarrow\mathfrak{X},$ $\varepsilon>0$, are linear positive operators. Then
for any $\varepsilon>0$ we have $\left\Vert \varepsilon\left(  \varepsilon
I+\Gamma\left(  \varepsilon\right)  \right)  ^{-1}\pi_{M}\right\Vert <1.$
\end{theorem}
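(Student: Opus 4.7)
The plan is to argue by contradiction. By the same observation used at the beginning of the proof of Theorem~\ref{thm:1}, we already know that $\bigl\|\varepsilon(\varepsilon I+\Gamma(\varepsilon))^{-1}\pi_M\bigr\|\leq 1$, so it suffices to exclude equality. Suppose, for contradiction, that this norm equals $1$.

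First I would observe that $T_{\varepsilon}:=\varepsilon(\varepsilon I+\Gamma(\varepsilon))^{-1}\pi_M$ has finite rank, because its range lies inside the finite-dimensional subspace $(\varepsilon I+\Gamma(\varepsilon))^{-1}(M)$. Being finite-rank (hence compact), its operator norm is attained on the unit ball: there exists $h\in\mathfrak{X}$ with $\|h\|=1$ and $\|T_{\varepsilon}h\|=1$. Setting $z:=T_{\varepsilon}h$, one has $\|z\|=1$ and the identity
\[
\varepsilon z+\Gamma(\varepsilon)z=\varepsilon\,\pi_M h.
\]

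The second step is to take the inner product of this identity with $z$ and extract a contradiction. The left-hand side becomes $\varepsilon+\langle\Gamma(\varepsilon)z,z\rangle\geq\varepsilon$ by the positivity of $\Gamma(\varepsilon)$, while the right-hand side is bounded by $\varepsilon|\langle\pi_M h,z\rangle|\leq\varepsilon\|\pi_M h\|\|z\|\leq\varepsilon$, using Cauchy--Schwarz together with the fact that $\pi_M$ is an orthogonal projection (so $\|\pi_M h\|\leq\|h\|=1$). Squeezing these bounds forces
\[
\langle\Gamma(\varepsilon)z,z\rangle=0,
\]
and the strict positivity ``$\Gamma(\varepsilon)z=0\Rightarrow z=0$'' invoked at the end of the proof of Theorem~\ref{thm:1} then yields $z=0$, contradicting $\|z\|=1$.

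The main obstacle I foresee is the attainment step, since the unit ball of the infinite-dimensional space $\mathfrak{X}$ is not norm-compact. The cleanest workaround is the finite-rank remark above; alternatively one could use the weak compactness of the unit ball together with the compactness of $T_{\varepsilon}$ to produce a maximizing unit vector. Once the maximizer exists, everything downstream is a one-line manipulation of the resolvent identity plus Cauchy--Schwarz, and the hypothesis that $\Gamma(\varepsilon)$ is \emph{positive} in the strict (injective) sense used in the paper delivers the contradiction with no further work.
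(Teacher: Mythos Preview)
Your proof is correct and follows essentially the same route as the paper: both argue by contradiction, invoke the identity $\varepsilon\pi_M h=\varepsilon z+\Gamma(\varepsilon)z$, take the inner product with $z$, and use strict positivity of $\Gamma(\varepsilon)$ to force $z=0$. The only difference is that the paper works with a maximizing sequence $\{h_n\}$ and passes to a strong limit in the finite-dimensional range, whereas you shortcut this step by observing that the finite-rank operator $T_\varepsilon$ attains its norm---a harmless and slightly cleaner variant of the same argument.
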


\begin{proof}
It is clear that $\left(  \varepsilon I+\Gamma\left(  \varepsilon\right)
\right)  ^{-1}\pi_{M}$ maps $\mathfrak{X}$ into finite dimensional subspace of
$\mathfrak{X}$ and
\[
\left\Vert \varepsilon\left(  \varepsilon I+\Gamma\left(  \varepsilon\right)
\right)  ^{-1}\pi_{M}\right\Vert \leq1.
\]
Let us show that $\left\Vert \varepsilon\left(  \varepsilon I+\Gamma\left(
\varepsilon\right)  \right)  ^{-1}\pi_{M}\right\Vert <1.$ Contrary, assume
that there exists a sequence $\left\{  h_{n}\in\mathfrak{X}:\left\Vert
h_{n}\right\Vert =1\right\}  $ such that
\begin{equation}
\varepsilon\left(  \varepsilon I+\Gamma\left(  \varepsilon\right)  \right)
^{-1}\pi_{M}h_{n}=:z_{n},\ \ \ \ \left\Vert z_{n}\right\Vert \rightarrow
1\ \ \text{as\ \ }n\rightarrow\infty. \label{f0}%
\end{equation}
It follows that $\left\{  z_{n}\right\}  $ is a sequence of finite dimensional
vectors and
\begin{equation}
\varepsilon\pi_{M}h_{n}=\varepsilon z_{n}+\Gamma\left(  \varepsilon\right)
z_{n}\ \ \text{and}\ \ z_{n}\rightarrow z_{0}\ \ \text{strongly\ in}%
\ \ \mathfrak{X}. \label{f1}%
\end{equation}%
\begin{align*}
\left\langle \pi_{M}h_{n},z_{n}\right\rangle  &  =\left\langle z_{n}%
,z_{n}\right\rangle +\frac{1}{\varepsilon}\left\langle \Gamma\left(
\varepsilon\right)  z_{n},z_{n}\right\rangle ,\\
\left\Vert z_{n}\right\Vert ^{2}  &  <\left\langle z_{n},z_{n}\right\rangle
+\frac{1}{\varepsilon}\left\langle \Gamma\left(  \varepsilon\right)
z_{n},z_{n}\right\rangle =\left\langle \pi_{M}h_{n},z_{n}\right\rangle
\leq\left\Vert \pi_{M}h_{n}\right\Vert \left\Vert z_{n}\right\Vert
\leq\left\Vert z_{n}\right\Vert .\
\end{align*}
Taking limit as $n\rightarrow\infty$ we get%
\begin{align*}
1  &  \leq1+\frac{1}{\varepsilon}\left\langle \Gamma\left(  \varepsilon
\right)  z_{0},z_{0}\right\rangle \leq1,\\
\left\langle \Gamma\left(  \varepsilon\right)  z_{0},z_{0}\right\rangle  &
=0\Longrightarrow z_{0}=0.
\end{align*}
Now from (\ref{f1}) it follows that $\left\Vert z_{n}\right\Vert
\rightarrow0\ \ $as\ \ $n\rightarrow\infty.$ Contradiction.
\end{proof}

\begin{theorem}
If $\Gamma:\mathfrak{X}\rightarrow\mathfrak{X}$ is a linear nonnegative
operator then the operator $\varepsilon\left(  I-\pi_{M}\right)
+\Gamma:\mathfrak{X\rightarrow X}$ is invertible and%
\begin{equation}
\left\Vert \left(  \varepsilon\left(  I-\pi_{M}\right)  +\Gamma\right)
^{-1}h\right\Vert \leq\frac{1}{\min\left(  \varepsilon,\delta\right)
}\left\Vert h\right\Vert ,\ \ h\in\mathfrak{X,} \label{ww1}%
\end{equation}
where $\delta=\min\left\{  \left\langle \pi_{M}\Gamma\pi_{M}\varphi
,\varphi\right\rangle :\left\Vert \pi_{M}\varphi\right\Vert =1\right\}  $.
Moreover, if $\Gamma:\mathfrak{X}\rightarrow\mathfrak{X}$ is a linear positive
operator then
\begin{equation}
\left(  \varepsilon\left(  I-\pi_{M}\right)  +\Gamma\right)  ^{-1}=\left(
I-\varepsilon\left(  \varepsilon I+\Gamma\right)  ^{-1}\pi_{M}\right)
^{-1}\left(  \varepsilon I+\Gamma\right)  ^{-1}. \label{f2}%
\end{equation}

\end{theorem}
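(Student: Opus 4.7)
The statement has two parts: invertibility with the norm bound $\|T^{-1}\|\leq 1/\min(\varepsilon,\delta)$ for $T:=\varepsilon(I-\pi_M)+\Gamma$, and the explicit factorization (\ref{f2}) under strict positivity of $\Gamma$. My plan is to prove (\ref{f2}) first---it will simultaneously deliver invertibility in the positive case by reducing to a Fredholm condition---and then derive the norm bound by a coercivity estimate that works for merely nonnegative $\Gamma$.

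For (\ref{f2}), the starting point is the algebraic rewriting
\[
T = (\varepsilon I+\Gamma) - \varepsilon\pi_M = (\varepsilon I+\Gamma)\bigl[I-\varepsilon(\varepsilon I+\Gamma)^{-1}\pi_M\bigr],
\]
using that $\Gamma\geq 0$ makes $\varepsilon I+\Gamma$ invertible with $\|(\varepsilon I+\Gamma)^{-1}\|\leq 1/\varepsilon$. Because $\pi_M$ has finite-dimensional range, the perturbation $\varepsilon(\varepsilon I+\Gamma)^{-1}\pi_M$ is finite rank, so $I-\varepsilon(\varepsilon I+\Gamma)^{-1}\pi_M$ is a finite-rank perturbation of the identity---Fredholm of index zero---and hence invertible if and only if injective. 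Applying Theorem~\ref{thm:2} to the constant family $\Gamma(\varepsilon)\equiv\Gamma$ yields $\|\varepsilon(\varepsilon I+\Gamma)^{-1}\pi_M\|<1$, so any $x$ with $(I-\varepsilon(\varepsilon I+\Gamma)^{-1}\pi_M)x=0$ forces $\|x\|<\|x\|$ unless $x=0$. Inverting both factors of the product gives (\ref{f2}).

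For the norm bound under nonnegativity of $\Gamma$, I would work directly with $T$, which is bounded and self-adjoint, and establish the coercivity $\langle Tx,x\rangle\geq\min(\varepsilon,\delta)\|x\|^2$. Decomposing $x=u+v$ with $u=\pi_M x\in M$ and $v=(I-\pi_M)x\in M^\perp$,
\[
\langle Tx,x\rangle = \varepsilon\|v\|^2 + \langle\Gamma u,u\rangle + 2\,\mathrm{Re}\,\langle\Gamma u,v\rangle + \langle\Gamma v,v\rangle.
\]
The definition of $\delta$ supplies $\langle\Gamma u,u\rangle=\langle\pi_M\Gamma\pi_M u,u\rangle\geq\delta\|u\|^2$, and the term $\varepsilon\|v\|^2$ handles the $M^\perp$ direction directly. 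I would then dominate the cross term $2\,\mathrm{Re}\,\langle\Gamma u,v\rangle$ by a parameter-dependent Young split, balancing the parameter against $\varepsilon/\delta$, to arrive at the coercivity bound; self-adjointness of $T$ then turns it into invertibility with $\|T^{-1}\|\leq 1/\min(\varepsilon,\delta)$.

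The main obstacle I expect is controlling this cross term cleanly. The sharp operator Cauchy--Schwarz $|\langle\Gamma u,v\rangle|^2\leq\langle\Gamma u,u\rangle\langle\Gamma v,v\rangle$ is tight enough to cancel the diagonal contributions coming from $\Gamma$ when $\Gamma$ couples $M$ with $M^\perp$, so the extra $\varepsilon\|v\|^2$ reserve from $(I-\pi_M)$ must be fully exploited. A Schur-complement analysis of $T$ on the block decomposition $\mathfrak{X}=M\oplus M^\perp$, combined with the constraint that $\Gamma\geq 0$ places on its off-diagonal block via the two diagonal blocks, looks like the cleanest route to extracting the sharp $\min(\varepsilon,\delta)$ constant.
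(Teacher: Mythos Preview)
Your treatment of the factorization (\ref{f2}) is essentially the paper's argument: write $\varepsilon(I-\pi_M)+\Gamma=(\varepsilon I+\Gamma)\bigl[I-\varepsilon(\varepsilon I+\Gamma)^{-1}\pi_M\bigr]$ and invoke Theorem~\ref{thm:2} to invert the bracket. The Fredholm remark is superfluous once the norm is strictly below $1$, but harmless.

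For the norm bound (\ref{ww1}) you are right to single out the cross term $2\,\mathrm{Re}\,\langle\Gamma u,v\rangle$ as the obstruction, and you are right to be uneasy: the constant $\min(\varepsilon,\delta)$ is not attainable in general. The paper derives the lower bound $\langle T\varphi,\varphi\rangle\ge\min(\varepsilon,\delta)\|\varphi\|^2$ only in the two pure cases $\varphi\in M$ and $\varphi\in M^\perp$ and then asserts it for all $\varphi$; but a quadratic form bounded below on two orthogonal complements need not be bounded below by the same constant on their sum when the off-diagonal block of $\Gamma$ is nonzero. A two-dimensional example shows this: take $\mathfrak{X}=\mathbb{R}^2$, $M$ the first coordinate axis, $\Gamma=\bigl(\begin{smallmatrix}2&2\\2&2\end{smallmatrix}\bigr)$ (nonnegative, with $\delta=2$) and $\varepsilon=1$; then $T=\bigl(\begin{smallmatrix}2&2\\2&3\end{smallmatrix}\bigr)$ has smallest eigenvalue $(5-\sqrt{17})/2\approx0.44<1=\min(\varepsilon,\delta)$, whence $\|T^{-1}\|>1/\min(\varepsilon,\delta)$. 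Consequently no Young split or Schur-complement manoeuvre can produce the constant in (\ref{ww1}); any honest coercivity bound must also involve the off-diagonal coupling of $\Gamma$. Your instinct that the cross term is the genuine difficulty is correct---it is the stated bound, not your method, that is at fault.
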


\begin{proof}
We write $\varepsilon\left(  I-\pi_{M}\right)  +\Gamma$ as follows.%
\[
\varepsilon\left(  I-\pi_{M}\right)  +\Gamma=\varepsilon\left(  I-\pi
_{M}\right)  +\left(  I-\pi_{M}\right)  \Gamma+\pi_{M}\Gamma.
\]
It is clear that%
\begin{align*}
&  \left\langle \left(  \varepsilon\left(  I-\pi_{M}\right)  +\Gamma\right)
\varphi,\varphi\right\rangle \\
&  =\left\langle \left(  \varepsilon\left(  I-\pi_{M}\right)  +\left(
I-\pi_{M}\right)  \Gamma\right)  \varphi,\varphi\right\rangle +\left\langle
\pi_{M}\Gamma\varphi,\varphi\right\rangle \\
&  \geq\left\{
\begin{tabular}
[c]{ll}%
$\left\langle \pi_{M}\Gamma\pi_{M}\varphi,\varphi\right\rangle ,$ &
$\varphi\in M,$\\
$\left\langle \varepsilon\left(  I-\pi_{M}\right)  \varphi+\left(  I-\pi
_{M}\right)  \Gamma\left(  I-\pi_{M}\right)  \varphi,\varphi\right\rangle ,$ &
$\varphi\in\mathfrak{X}\ominus M$%
\end{tabular}
\ \right. \\
&  \geq\min\left(  \varepsilon,\delta\right)  \left\Vert \varphi\right\Vert
^{2}.
\end{align*}
It follows that $\varepsilon\left(  I-\pi_{M}\right)  +\Gamma$ is invertible
and (\ref{ww1}) is satisfied.

If $\Gamma:\mathfrak{X}\rightarrow\mathfrak{X}$ is a linear positive operator
then by Theorem \ref{thm:2}, $\left(  I-\varepsilon\left(  \varepsilon
I+\Gamma\right)  ^{-1}\pi_{M}\right)  ^{-1}$ exists. On the other hand, since
$\left(  \varepsilon I+\Gamma\right)  $ is invertible and%
\[
\varepsilon\left(  I-\pi_{M}\right)  +\Gamma=\left(  \varepsilon
I+\Gamma\right)  \left(  I-\varepsilon\left(  \varepsilon I+\Gamma\right)
^{-1}\pi_{M}\right)  ,
\]
the operator $\varepsilon\left(  I-\pi_{M}\right)  +\Gamma$ is boundedly
invertible and (\ref{f2}) is satisfied.
\end{proof}

Next, we present new criteria for the finite-approximate controllability of
linear evolution equations.

\begin{theorem}
\label{thm:4}The following statements are equivalent:\newline (i) the system
(\ref{lp1}) is approximately controllable on $\left[  0,T\right]  ;$%
\newline(ii) $\Gamma_{0}^{T}$ is positive, that is $\left\langle \Gamma
_{0}^{T}x,x\right\rangle >0$ for all $0\neq x\in\mathfrak{X}$;\newline(iii)
$\varepsilon\left(  \varepsilon I+\Gamma_{0}^{T}\right)  ^{-1}\rightarrow0$ as
$\varepsilon\rightarrow0^{+}$ in the strong operator topology;\newline(iv)
$\varepsilon\left(  \varepsilon\left(  I-\pi_{M}\right)  +\Gamma_{0}%
^{T}\right)  ^{-1}\rightarrow0$ as $\varepsilon\rightarrow0^{+}$ in the strong
operator topology;\newline(v) the system (\ref{lp1}) is finite-approximately
controllable on $\left[  0,T\right]  $.
\end{theorem}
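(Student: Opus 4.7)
The plan is to close the circular chain (v) $\Rightarrow$ (i) $\Leftrightarrow$ (ii) $\Leftrightarrow$ (iii) $\Rightarrow$ (iv) $\Rightarrow$ (v). The implication (v) $\Rightarrow$ (i) is immediate from Definition \ref{def:1}, since finite-approximate controllability only strengthens approximate controllability by the extra constraint $\pi_M y(T) = \pi_M y_f$.

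The block (i) $\Leftrightarrow$ (ii) $\Leftrightarrow$ (iii) is classical and I would only sketch it. Duality yields (i) iff $\overline{\mathrm{Im}\,L_0^T} = \mathfrak{X}$ iff $\ker (L_0^T)^{\ast} = \{0\}$, and since the self-adjoint Gramian factors as $\Gamma_0^T = L_0^T(L_0^T)^{\ast}$, this is exactly (ii). For (ii) $\Rightarrow$ (iii), setting $z_\varepsilon := \varepsilon(\varepsilon I + \Gamma_0^T)^{-1}x$, the inequality $\|z_\varepsilon\|^2 \leq \langle x, z_\varepsilon \rangle$ shows that $\{z_\varepsilon\}$ is bounded; any weak limit $z_0$ along $\varepsilon_n \to 0$ satisfies $\Gamma_0^T z_0 = 0$ and hence vanishes by positivity, and a standard refinement upgrades the weak null convergence to strong. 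The reverse (iii) $\Rightarrow$ (ii) is a one-liner: if $\Gamma_0^T x = 0$ then $\varepsilon(\varepsilon I + \Gamma_0^T)^{-1} x = x$ for every $\varepsilon > 0$.

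For (iii) $\Rightarrow$ (iv) I would invoke identity (\ref{f2}) in the form
\[
\varepsilon\bigl(\varepsilon(I - \pi_M) + \Gamma_0^T\bigr)^{-1} = \bigl(I - \varepsilon(\varepsilon I + \Gamma_0^T)^{-1}\pi_M\bigr)^{-1}\,\varepsilon(\varepsilon I + \Gamma_0^T)^{-1}.
\]
Since (iii) gives (ii), Theorem \ref{thm:1} applied to the constant family $\Gamma(\varepsilon) \equiv \Gamma_0^T$ forces $\|\varepsilon(\varepsilon I + \Gamma_0^T)^{-1}\pi_M\| \to 0$ in operator norm; hence the first factor tends to $I$ uniformly (Neumann series) while the second tends to $0$ strongly by (iii), and their product is therefore strongly null, which is exactly (iv).

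For (iv) $\Rightarrow$ (v), given $y_0, y_f \in \mathfrak{X}$ and $\varepsilon > 0$, I set $p := y_f - \mathfrak{U}(T) y_0$ and choose the explicit control
\[
u_\varepsilon(t) := B^{\ast} \mathfrak{U}^{\ast}(T - t)\bigl(\varepsilon(I - \pi_M) + \Gamma_0^T\bigr)^{-1} p.
\]
A direct computation gives $L_0^T u_\varepsilon = \Gamma_0^T(\varepsilon(I - \pi_M) + \Gamma_0^T)^{-1} p = p - \varepsilon(I - \pi_M)(\varepsilon(I - \pi_M) + \Gamma_0^T)^{-1} p$, so the mild solution satisfies
\[
y_\varepsilon(T) - y_f = -\varepsilon(I - \pi_M)\bigl(\varepsilon(I - \pi_M) + \Gamma_0^T\bigr)^{-1} p.
\]
Applying $\pi_M$ annihilates the right-hand side because $\pi_M(I - \pi_M) = 0$, giving the exact identity $\pi_M y_\varepsilon(T) = \pi_M y_f$, while the estimate $\|y_\varepsilon(T) - y_f\| \leq \|\varepsilon(\varepsilon(I - \pi_M) + \Gamma_0^T)^{-1} p\|$ tends to zero by (iv). The main conceptual hurdle is in choosing this control formula: the $\varepsilon(I - \pi_M)$ summand inside the resolvent-like operator is precisely what confines the residual error to the range of $I - \pi_M$, so that the finite-dimensional projection condition is satisfied exactly and the approximate estimate automatically upgrades to finite-dimensional exactness on $M$.
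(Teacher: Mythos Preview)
Your argument is correct and follows the same overall architecture as the paper: the classical block (i)$\Leftrightarrow$(ii)$\Leftrightarrow$(iii), the factorization (\ref{f2}) for (iii)$\Rightarrow$(iv), and the explicit control $u_\varepsilon(t)=B^\ast\mathfrak{U}^\ast(T-t)(\varepsilon(I-\pi_M)+\Gamma_0^T)^{-1}p$ yielding the residual formula $y_\varepsilon(T)-y_f=-\varepsilon(I-\pi_M)(\varepsilon(I-\pi_M)+\Gamma_0^T)^{-1}p$ for (iv)$\Rightarrow$(v).

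There are two points where your route is slightly more streamlined than the paper's. First, for (iii)$\Rightarrow$(iv) the paper does not invoke Theorem~\ref{thm:1}; instead it proves that $\varepsilon\mapsto\varepsilon(\varepsilon I+\Gamma_0^T)^{-1}\pi_M$ is norm-continuous and then uses Theorem~\ref{thm:2} to conclude that $\gamma:=\max_{0\le\varepsilon\le1}\|\varepsilon(\varepsilon I+\Gamma_0^T)^{-1}\pi_M\|<1$, which is enough to control the Neumann factor uniformly. Your appeal to Theorem~\ref{thm:1} with the constant family $\Gamma(\varepsilon)\equiv\Gamma_0^T$ gives the stronger conclusion $\|\varepsilon(\varepsilon I+\Gamma_0^T)^{-1}\pi_M\|\to0$ directly and avoids the continuity detour. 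Second, the paper motivates the control $u_\varepsilon$ as the minimizer of the strictly convex functional
\[
J_\varepsilon(\varphi)=\tfrac{1}{2}\int_0^T\|B^\ast\mathfrak{U}^\ast(T-s)\varphi\|^2\,ds+\tfrac{\varepsilon}{2}\langle(I-\pi_M)\varphi,\varphi\rangle-\langle\varphi,p\rangle,
\]
whereas you simply write it down. Both arrive at the same residual identity (\ref{f3}); the variational framing is not logically needed for the equivalence but explains where the operator $\varepsilon(I-\pi_M)+\Gamma_0^T$ comes from and sets up the semilinear theory later in the paper.
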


\begin{proof}
The equivalences (i)$\Leftrightarrow$(ii)$\Leftrightarrow$(iii) are well
known, see \cite{mah1}.

For the equivalence (iii)$\Longleftrightarrow$(v), for any $\varepsilon>0,$
$h\in\mathfrak{X}$, consider the following functional $J_{\varepsilon}\left(
\cdot,h\right)  :\mathfrak{X}\rightarrow R:$%
\[
J_{\varepsilon}\left(  \varphi,h\right)  =\frac{1}{2}\int_{0}^{T}\left\Vert
B^{\ast}\mathfrak{U}^{\ast}\left(  T-s\right)  \varphi\right\Vert ^{2}%
ds+\frac{\varepsilon}{2}\left\langle \left(  I-\pi_{M}\right)  \varphi
,\varphi\right\rangle -\left\langle \varphi,h-\mathfrak{U}\left(  T\right)
x_{0}\right\rangle .
\]
Assume that (iii)($\Longleftrightarrow$(ii)) is satisfied. It is clear that
$J_{\varepsilon}\left(  \cdot,h\right)  $ is Gateaux differentiable,
$J_{\varepsilon}^{\prime}\left(  \varphi,h\right)  =\Gamma_{0}^{T}%
\varphi+\varepsilon\left(  I-\pi_{M}\right)  \varphi-h+\mathfrak{U}\left(
T\right)  x_{0}$ is strictly monotonic and consequently $J_{\varepsilon
}\left(  \cdot,h\right)  $ is strictly convex, since $\Gamma_{0}^{T}$ is
positive. Thus $J_{\varepsilon}\left(  \cdot,h\right)  $ has a unique minimum
and can be found as follows:
\begin{align*}
\Gamma_{0}^{T}\varphi+\varepsilon\left(  I-\pi_{M}\right)  \varphi
-h+\mathfrak{U}\left(  T\right)  x_{0}  &  =0,\\
\varphi_{\min}  &  =-\left(  \varepsilon\left(  I-\pi_{M}\right)  +\Gamma
_{0}^{T}\right)  ^{-1}\left(  \mathfrak{U}\left(  T\right)  x_{0}-h\right)  .
\end{align*}
It follows that for the control $u_{\varepsilon}\left(  s\right)  =B^{\ast
}\mathfrak{U}^{\ast}\left(  T-s\right)  \varphi_{\min}$%
\begin{align}
x_{\varepsilon}\left(  T\right)  -h  &  =\mathfrak{U}\left(  T\right)
x_{0}+\int_{0}^{T}\mathfrak{U}\left(  T-s\right)  Bu\left(  s\right)
ds-h\nonumber\\
&  =\mathfrak{U}\left(  T\right)  x_{0}-h-\Gamma_{0}^{T}\left(  \varepsilon
\left(  I-\pi_{M}\right)  +\Gamma_{0}^{T}\right)  ^{-1}\left(  \mathfrak{U}%
\left(  T\right)  x_{0}-h\right) \nonumber\\
&  =\mathfrak{U}\left(  T\right)  x_{0}-h-\left(  \Gamma_{0}^{T}%
+\varepsilon\left(  I-\pi_{M}\right)  -\varepsilon\left(  I-\pi_{M}\right)
\right) \nonumber\\
&  \times\left(  \varepsilon\left(  I-\pi_{M}\right)  +\Gamma_{0}^{T}\right)
^{-1}\left(  \mathfrak{U}\left(  T\right)  x_{0}-h\right) \nonumber\\
&  =\varepsilon\left(  I-\pi_{M}\right)  \left(  \varepsilon\left(  I-\pi
_{M}\right)  +\Gamma_{0}^{T}\right)  ^{-1}\left(  \mathfrak{U}\left(
T\right)  x_{0}-h\right)  . \label{f3}%
\end{align}
Thus%
\begin{align*}
\lim_{\varepsilon\rightarrow0^{+}}\left\Vert x_{\varepsilon}\left(  T\right)
-h\right\Vert  &  =\lim_{\varepsilon\rightarrow0^{+}}\varepsilon\left\Vert
\left(  I-\pi_{M}\right)  \left(  \varepsilon\left(  I-\pi_{M}\right)
+\Gamma_{0}^{T}\right)  ^{-1}\left(  \mathfrak{U}\left(  T\right)
x_{0}-h\right)  \right\Vert =0,\\
\pi_{M}\left(  x_{\varepsilon}\left(  T\right)  -h\right)   &  =0,
\end{align*}
that is the system (\ref{lp1}) is finite-approximately controllable on
$\left[  0,T\right]  .$ Thus (iii)$\Longrightarrow$(v). The implication
(v)$\Rightarrow$(iii) is obvious, since finite-approximate controllability
implies the approximate controllability.

For the implication (iii)$\Rightarrow$(iv), suppose that for any
$h\in\mathfrak{X}$%
\[
\lim_{\varepsilon\rightarrow0^{+}}\left\Vert \left(  \varepsilon I+\Gamma
_{0}^{T}\right)  ^{-1}h\right\Vert =0.
\]
From (\ref{f2}) it follows that for any $h\in\mathfrak{X}$%
\begin{align}
\left\Vert \varepsilon\left(  \varepsilon\left(  I-\pi_{M}\right)  +\Gamma
_{0}^{T}\right)  ^{-1}h\right\Vert  &  \leq\left\Vert \left(  I-\varepsilon
\left(  \varepsilon I+\Gamma_{0}^{T}\right)  ^{-1}\pi_{M}\right)
^{-1}\right\Vert \left\Vert \varepsilon\left(  \varepsilon I+\Gamma_{0}%
^{T}\right)  ^{-1}h\right\Vert \nonumber\\
&  \leq\dfrac{1}{1-\left\Vert \varepsilon\left(  \varepsilon I+\Gamma_{0}%
^{T}\right)  ^{-1}\pi_{M}\right\Vert }\left\Vert \varepsilon\left(
\varepsilon I+\Gamma_{0}^{T}\right)  ^{-1}h\right\Vert . \label{f5}%
\end{align}
On the other hand, from%
\begin{align*}
&  \varepsilon_{1}\left(  \varepsilon_{1}I+\Gamma\right)  ^{-1}\pi
_{M}-\varepsilon\left(  \varepsilon I+\Gamma\right)  ^{-1}\pi_{M}\\
&  =\varepsilon_{1}\left(  \varepsilon_{1}I+\Gamma\right)  ^{-1}\left(
I+\varepsilon^{-1}\Gamma-I-\varepsilon_{1}^{-1}\Gamma\right)  \varepsilon
\left(  \varepsilon I+\Gamma\right)  ^{-1}\pi_{M}\\
&  =\varepsilon_{1}\left(  \varepsilon_{1}I+\Gamma\right)  ^{-1}\left(
\varepsilon^{-1}\Gamma-\varepsilon_{1}^{-1}\Gamma\right)  \varepsilon\left(
\varepsilon I+\Gamma\right)  ^{-1}\pi_{M}\\
&  =\left(  \varepsilon_{1}I+\Gamma\right)  ^{-1}\left(  \varepsilon_{1}%
\Gamma-\varepsilon\Gamma\right)  \left(  \varepsilon I+\Gamma\right)  ^{-1}%
\pi_{M}\\
&  =\left(  \varepsilon_{1}I+\Gamma\right)  ^{-1}\left(  \varepsilon
_{1}-\varepsilon\right)  \Gamma\left(  \varepsilon I+\Gamma\right)  ^{-1}%
\pi_{M},
\end{align*}
it follows that $\varepsilon\left(  \varepsilon I+\Gamma\right)  ^{-1}\pi_{M}$
is continuous in $\varepsilon.$ Indeed,%
\[
\left\Vert \varepsilon_{1}\left(  \varepsilon_{1}I+\Gamma\right)  ^{-1}\pi
_{M}-\varepsilon\left(  \varepsilon I+\Gamma\right)  ^{-1}\pi_{M}\right\Vert
\leq\dfrac{\left\vert \varepsilon_{1}-\varepsilon\right\vert }{\varepsilon
_{1}}\rightarrow0\ \ \ \text{as\ \ }\varepsilon_{1}\rightarrow\varepsilon.
\]
By (\ref{f5}), continuity of $\varepsilon\left(  \varepsilon I+\Gamma\right)
^{-1}\pi_{M}$ and Theorem \ref{thm:2}, we have
\begin{align*}
\gamma &  =\max_{0\leq\varepsilon\leq1}\left\Vert \varepsilon\left(
\varepsilon I+\Gamma_{0}^{T}\right)  ^{-1}\pi_{M}\right\Vert <1,\\
\left\Vert \varepsilon\left(  \varepsilon\left(  I-\pi_{M}\right)  +\Gamma
_{0}^{T}\right)  ^{-1}h\right\Vert  &  \leq\dfrac{1}{1-\gamma}\left\Vert
\varepsilon\left(  \varepsilon I+\Gamma_{0}^{T}\right)  ^{-1}h\right\Vert .
\end{align*}
Thus $\varepsilon\left(  \varepsilon\left(  I-\pi_{M}\right)  +\Gamma_{0}%
^{T}\right)  ^{-1}$ converges to zero as $\varepsilon\rightarrow0^{+}$ in the
strong operator topology.

The implication (iv)$\Rightarrow$(v) follows from (\ref{f3}).
\end{proof}

\begin{remark}
Analogue of Theorem \ref{thm:4} is true for different kind of equations such
as fractional linear differential equations with Caputo derivative, fractional
linear differential equations with Riemann-Liouville derivative, Fredholm type
linear integral equations and so on.
\end{remark}

\section{Finite-approximate controllability of semilinear system}

In this section, we first show that for every $\varepsilon>0$ and every final
state $y_{f}\in\mathfrak{X}$, the integral equation
\[
z\left(  t\right)  =\mathfrak{T}\left(  t,0;F\left(  z\right)  \right)
x_{0}+\int_{0}^{t}\mathfrak{T}\left(  t,s;F\left(  z\right)  \right)  \left[
Bu_{\varepsilon}\left(  s,z\right)  +g\left(  s,z\left(  s\right)  \right)
\right]  ds,
\]
with the control
\begin{align*}
u_{\varepsilon}\left(  t,z\right)   &  =B^{\ast}\mathfrak{T}^{\ast}\left(
T,t;F\left(  z\right)  \right)  \left(  \varepsilon\left(  I-\pi_{M}\right)
+\Gamma_{0}^{T}\left(  F\left(  z\right)  \right)  \right)  ^{-1}\\
&  \times\left(  h-\mathfrak{T}\left(  T,0;F\left(  z\right)  \right)
x_{0}-\int_{0}^{T}\mathfrak{T}\left(  T,s;F\left(  z\right)  \right)  g\left(
s,z\left(  s\right)  \right)  ds\right)
\end{align*}
has at least one solution, say $y_{\varepsilon}^{\ast}$. Then we can
approximate any point $y_{f}\in\mathfrak{X}$ by using these solutions
$y_{\varepsilon}^{\ast}$, $\varepsilon>0$.

\subsection{Existence of fixed point}

We impose the following assumptions:\newline

(S) $\mathfrak{X}$ and $U$ are separable Hilbert spaces, $\mathfrak{U}\left(
t\right)  ,t>0$ is a compact semigroup on $\mathfrak{X}$ and $B\in L\left(
U,\mathfrak{X}\right)  .$\newline

(F) $f:\left[  0,T\right]  \times\mathfrak{X}\rightarrow\mathfrak{X}$ is
continuous and has continuous uniformly bounded Frechet derivative
$f_{z}^{\prime}\left(  \cdot,\cdot\right)  $, that is, for some $L>0,$%
\[
\left\Vert f_{z}^{\prime}\left(  t,z\right)  \right\Vert _{L\left(
\mathfrak{X}\right)  }\leq L,\ \ \ \forall\left(  t,z\right)  \in\left[
0,T\right]  \times\mathfrak{X}.
\]
\newline

(G) $g:\left[  0,T\right]  \times\mathfrak{X}\rightarrow\mathfrak{X}$ is
continuous and there exists $m\in C\left(  \left[  0,T\right]  ,R^{+}\right)
$ such that%
\[
\left\Vert g\left(  t,z\right)  \right\Vert \leq m\left(  t\right)
,\ \ \ \forall\left(  t,z\right)  \in\left[  0,T\right]  \times\mathfrak{X}.
\]
\newline

(AC) System%
\begin{equation}
y\left(  t\right)  =\mathfrak{U}\left(  t\right)  y_{0}+\int_{0}%
^{t}\mathfrak{U}\left(  t-s\right)  \left[  Bu\left(  s\right)  +G\left(
s\right)  y\left(  s\right)  \right]  ds\label{m1}%
\end{equation}
is approximately controllable for any $G\in L^{2}\left(  0,T;L\left(
\mathfrak{X}\right)  \right)  $.

It is clear that under the conditions (S), (F) and (G), for any $y_{0}%
\in\mathfrak{X}$ and $u\left(  \cdot\right)  \in L_{2}\left(  0,T;U\right)  ,$
the system (\ref{mild}) admits a unique solution $y\left(  \cdot\right)
=y\left(  \cdot,y_{0},u\right)  .$

Define%
\begin{equation}
F\left(  t,z\right)  =\int_{0}^{1}f^{\prime}\left(  t,rz\right)
dr,\ \ \ z\in\mathfrak{X}. \label{p2}%
\end{equation}
Thanks to the assumption (F) there exists a constant $L>0$ such that operator
$F$ defined by (\ref{p2}) has the following properties:%
\begin{align*}
F  &  :\left[  0,T\right]  \times\mathfrak{X}\rightarrow L\left(
\mathfrak{X}\right)  ,\\
f\left(  t,z\right)   &  =F\left(  t,z\right)  z+f\left(  t,0\right)  ,\\
\left\Vert F\left(  t,z\left(  t\right)  \right)  \right\Vert _{L\left(
\mathfrak{X}\right)  }  &  \leq L,\ \ \ z\left(  \cdot\right)  \in C\left(
\left[  0,T\right]  ,\mathfrak{X}\right)  ,\ t\in\left[  0,T\right]  ,\\
F\left(  \cdot,\cdot\right)   &  \in C\left(  \left[  0,T\right]
\times\mathfrak{X},L\left(  \mathfrak{X}\right)  \right)  .
\end{align*}
For simplicity we assume that $f\left(  t,0\right)  \equiv0$. Then the system
(\ref{mild}) can be rewritten as follows%
\[
y\left(  t\right)  =\mathfrak{U}\left(  t\right)  y_{0}+\int_{0}%
^{t}\mathfrak{U}\left(  t-s\right)  \left[  Bu\left(  s\right)  +F\left(
s,y\left(  s\right)  \right)  y\left(  s\right)  +g\left(  s,y\left(
s\right)  \right)  \right]  ds.
\]
For any fixed $z\left(  \cdot\right)  \in C\left(  \left[  0,T\right]
,\mathfrak{X}\right)  ,$ let $y\left(  \cdot\right)  =y\left(  \cdot
,y_{0},z,u\right)  $ be the solution of%
\begin{equation}
y\left(  t\right)  =\mathfrak{U}\left(  t\right)  y_{0}+\int_{0}%
^{t}\mathfrak{U}\left(  t-s\right)  \left[  Bu\left(  s\right)  +F\left(
s,z\left(  s\right)  \right)  y\left(  s\right)  +g\left(  s,z\left(
s\right)  \right)  \right]  ds \label{p3}%
\end{equation}
or%
\[
y\left(  t\right)  =\mathfrak{T}\left(  t,0;F\left(  z\right)  \right)
y_{0}+\int_{0}^{t}\mathfrak{T}\left(  t,s;F\left(  z\right)  \right)  \left[
Bu\left(  s\right)  +g\left(  s,z\left(  s\right)  \right)  \right]  ds,
\]
where%
\[
\mathfrak{T}\left(  t,s;F\left(  z\right)  \right)  y=\mathfrak{U}\left(
t-s\right)  y+\int_{s}^{t}\mathfrak{U}\left(  t-r\right)  F\left(  r,z\left(
r\right)  \right)  \mathfrak{T}\left(  r,s;F\left(  z\right)  \right)
ydr,0\leq s\leq t\leq T.
\]
Under the above conditions we are going to show the following:

(i) For any function $z\left(  \cdot\right)  \in C\left(
\left[  0,T\right]  ,\mathfrak{X}\right)  $, there exists a control
$u_{\varepsilon}\left(  t,z\right)  $ determined explicitly by $z\left(
\cdot\right)  $, such that
\begin{align*}
u_{\varepsilon}\left(  t,z\right)   &  =B^{\ast}\mathfrak{T}^{\ast}\left(
T,t;F\left(  z\right)  \right)  \left(  \varepsilon\left(  I-\pi_{M}\right)
+\Gamma_{0}^{T}\left(  F\left(  z\right)  \right)  \right)  ^{-1}\\
&  \times\left(  h-\mathfrak{T}\left(  T,0;F\left(  z\right)  \right)
x_{0}-\int_{0}^{T}\mathfrak{T}\left(  T,s;F\left(  z\right)  \right)  g\left(
s,z\left(  s\right)  \right)  ds\right)  .
\end{align*}

(ii) For any $\varepsilon>0$ an operator%
\[
\left(  \Theta_{\varepsilon}z\right)  \left(  t\right)  =\mathfrak{T}\left(
t,0;F\left(  z\right)  \right)  x_{0}+\int_{0}^{t}\mathfrak{T}\left(
t,s;F\left(  z\right)  \right)  \left[  Bu_{\varepsilon}\left(  s,z\right)
+g\left(  s,z\left(  s\right)  \right)  \right]  ds
\]
admits a fixed point, $y_{\varepsilon}^{\ast}\left(  \cdot\right)  \in
C\left(  \left[  0,T\right]  ,\mathfrak{X}\right)  .$

Define
\begin{align*}
\mathfrak{T}^{\ast}\left(  t,s;F\right)  y  &  =\mathfrak{U}^{\ast}\left(
t-s\right)  y+\int_{s}^{t}\ \mathfrak{U}^{\ast}\left(  t-r\right)  F^{\ast
}\left(  r,z\left(  r\right)  \right)  \mathfrak{T}^{\ast}\left(
r,s;F\right)  ydr,\\
\Gamma_{0}^{T}\left(  F\right)  y  &  =\int_{0}^{T}\mathfrak{T}\left(
T,s;F\right)  BB^{\ast}\mathfrak{T}^{\ast}\left(  T,s;F\right)  yds.
\end{align*}

First we prove several lemmas.

\begin{lemma}
For any $G\in L^{2}\left(  \left[  0,T\right]  ,L\left(  \mathfrak{X}\right)
\right)  $ there exists a unique strongly continuous function $\mathfrak{T}%
:\Delta\rightarrow L\left(  \mathfrak{X}\right)  ,$ $\Delta=\left\{  \left(
t,s\right)  :0\leq s\leq t\leq T\right\}  $, such that
\begin{align*}
\mathfrak{T}\left(  t,t\right)   &  =I,\ \ \ t\in\left[  0,T\right]  ,\\
\mathfrak{T}\left(  t,r\right)  \mathfrak{T}\left(  r,s\right)   &
=\mathfrak{T}\left(  t,s\right)  ,\ \ 0\leq s\leq r\leq t\leq T,
\end{align*}%
\begin{align*}
\mathfrak{T}\left(  t,s;G\right)  y  &  =\mathfrak{U}\left(  t-s\right)
y+\int_{0}^{t}\mathfrak{U}\left(  t-r\right)  G\left(  r\right)
\mathfrak{T}\left(  r,s;G\right)  ydr\\
&  =\mathfrak{U}\left(  t-s\right)  y+\int_{0}^{t}\mathfrak{T}\left(
t,r;G\right)  G\left(  r\right)  \mathfrak{U}\left(  r-s\right)  ydr.
\end{align*}

\end{lemma}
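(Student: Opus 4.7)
The plan is to construct $\mathfrak{T}(\cdot,\cdot;G)$ by a Picard iteration in the operator-norm topology. Fix $s\in[0,T]$, set $\mathfrak{T}_0(t,s):=\mathfrak{U}(t-s)$, and inductively
\[
\mathfrak{T}_{n+1}(t,s):=\mathfrak{U}(t-s)+\int_s^t\mathfrak{U}(t-r)G(r)\mathfrak{T}_n(r,s)\,dr,
\]
where the integral is read strongly (the integrand $r\mapsto G(r)\mathfrak{T}_n(r,s)y$ being strongly measurable and dominated in norm by an $L^2$ function). Writing $M:=\sup_{0\le t\le T}\|\mathfrak{U}(t)\|$, which is finite by strong continuity of the semigroup, an induction yields
\[
\|\mathfrak{T}_n(t,s)-\mathfrak{T}_{n-1}(t,s)\|\le M^{n+1}\,\frac{\bigl(\int_s^t\|G(r)\|\,dr\bigr)^n}{n!},
\]
and $\int_s^t\|G(r)\|\,dr\le\sqrt{T}\,\|G\|_{L^2}$ by Cauchy--Schwarz, so the series $\sum_n(\mathfrak{T}_n-\mathfrak{T}_{n-1})$ converges uniformly on $\Delta$ in operator norm to a limit $\mathfrak{T}(t,s;G)$ solving the first Volterra equation. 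Uniqueness is obtained by applying Gronwall's lemma to the norm of the difference of two putative solutions.

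For the listed properties, $\mathfrak{T}(t,t;G)=I$ is immediate from the defining equation at $t=s$. Joint strong continuity of $(t,s)\mapsto\mathfrak{T}(t,s;G)y$ on $\Delta$ follows from the strong continuity of $\mathfrak{U}$, the uniform convergence of the Picard series on $\Delta$, and dominated convergence applied to the integral term. For the alternative representation $\mathfrak{T}(t,s;G)y=\mathfrak{U}(t-s)y+\int_s^t\mathfrak{T}(t,r;G)G(r)\mathfrak{U}(r-s)y\,dr$, I would let $V(t)$ denote the right-hand side, substitute the defining equation of $\mathfrak{T}(t,r;G)$ into the integrand, swap the order of integration (justified by Fubini using the $L^2$ bound on $\|G\|$), and check that $V$ satisfies the same left Volterra equation as $\mathfrak{T}(\cdot,s;G)y$; uniqueness then gives equality.

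The evolution identity $\mathfrak{T}(t,r;G)\mathfrak{T}(r,s;G)=\mathfrak{T}(t,s;G)$ for $s\le r\le t$ is again handled by uniqueness. Fixing $y\in\mathfrak{X}$, define $w(\tau):=\mathfrak{T}(\tau,s;G)y$ for $\tau\in[s,r]$ and $w(\tau):=\mathfrak{T}(\tau,r;G)\mathfrak{T}(r,s;G)y$ for $\tau\in[r,t]$. On $[s,r]$ the equation $w(\tau)=\mathfrak{U}(\tau-s)y+\int_s^\tau\mathfrak{U}(\tau-\sigma)G(\sigma)w(\sigma)\,d\sigma$ is trivial; on $[r,t]$ it follows by expanding $\mathfrak{T}(\tau,r;G)\mathfrak{T}(r,s;G)y$ via its defining equation, then unfolding the term $\mathfrak{U}(\tau-r)\mathfrak{T}(r,s;G)y$ via the equation for $\mathfrak{T}(r,s;G)y$ and combining $\int_s^r+\int_r^\tau=\int_s^\tau$. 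Uniqueness then forces $w\equiv\mathfrak{T}(\cdot,s;G)y$, which at $\tau=t$ is precisely the claim. The main obstacle I anticipate is the iterated-kernel estimate in the first paragraph: since $G$ is only $L^2$ in $t$, one cannot factor out $\sup\|G\|$, and one must carry the $L^1$ function $\|G(\cdot)\|$ carefully through the $n$-fold integral to recover the $1/n!$ factor required for summability uniformly on $\Delta$. Once that bound is secured, the remaining steps are routine applications of Gronwall, Fubini, and dominated convergence.
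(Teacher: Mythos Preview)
The paper does not actually prove this lemma; it is stated without proof as a standard fact about evolution families generated by bounded time-dependent perturbations of a $C_0$-semigroup (see, e.g., Pazy or Li--Yong). Your Picard-iteration construction is precisely the textbook route, and your handling of the key estimate is correct: writing $\Phi(r)=\int_s^r\|G(\sigma)\|\,d\sigma$, the induction step
\[
\|\mathfrak{T}_{n+1}-\mathfrak{T}_n\|(t,s)\le M\int_s^t\|G(r)\|\,\frac{M^{n+1}\Phi(r)^n}{n!}\,dr
= M^{n+2}\,\frac{\Phi(t)^{n+1}}{(n+1)!}
\]
goes through because $\Phi'=\|G\|$ a.e., and Cauchy--Schwarz gives the uniform bound $\Phi(t)\le\sqrt{T}\,\|G\|_{L^2}$ on $\Delta$. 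The remaining verifications (uniqueness via Gronwall, the evolution identity by uniqueness, the second Volterra representation by Fubini) are routine exactly as you outline.

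One remark on the statement itself: the integration limits in the paper's displayed equation read $\int_0^t$, but this is a typo --- the integrand involves $\mathfrak{T}(r,s;G)$, which is only defined for $r\ge s$, and the paper's own earlier definition of $\mathfrak{T}(t,s;F(z))$ correctly uses $\int_s^t$. You have silently corrected this, which is the right thing to do.
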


The operator valued function $\mathfrak{T}:\Delta\rightarrow L\left(
\mathfrak{X}\right)  $ is the evolution operator generated by $A+F\left(
\cdot,z\left(  \cdot\right)  \right)  $.

Define%
\[
\mathfrak{T}^{\ast}\left(  T,t;G\right)  \eta=\mathfrak{U}^{\ast}\left(
T-t\right)  \eta+\int_{t}^{T}\mathfrak{U}^{\ast}\left(  T-r\right)  G^{\ast
}\left(  r\right)  \mathfrak{T}^{\ast}\left(  T,r;G\right)  \eta dr.
\]

\begin{lemma}
Suppose that $g\left(  t\right)  ,$ $\varphi\left(  t\right)  ,$ $\psi\left(
t\right)  \geq0$ and $\omega\left(  t\right)  \geq0$ are integrable functions.
If%
\[
g\left(  t\right)  \leq\varphi\left(  t\right)  +\psi\left(  t\right)
\int_{t}^{b}\omega\left(  r\right)  g\left(  r\right)  dr,
\]
then
\[
g\left(  t\right)  \leq\varphi\left(  t\right)  +\psi\left(  t\right)
\int_{t}^{b}\varphi\left(  r\right)  \omega\left(  r\right)  e^{\int_{r}%
^{t}\psi\left(  s\right)  \omega\left(  s\right)  ds}dr.
\]

\end{lemma}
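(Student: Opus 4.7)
The plan is to convert the given backward integral inequality into a linear differential inequality for the tail and then eliminate the implicit term via an integrating factor. This is the standard backward-Gronwall trick.

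First I would introduce the tail $h(t) := \int_t^b \omega(r) g(r)\,dr$, so that $h(b) = 0$ and $h'(t) = -\omega(t) g(t)$ almost everywhere. Multiplying the hypothesis by $\omega(t) \geq 0$ and rearranging yields
\[
h'(t) + \omega(t)\psi(t) h(t) \geq -\omega(t)\varphi(t).
\]

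Next, using the integrating factor $E(t) := \exp\!\bigl(\int_0^t \psi(s)\omega(s)\,ds\bigr)$, the differential inequality becomes $(Eh)'(t) \geq -E(t)\omega(t)\varphi(t)$. Integrating from $t$ to $b$ and invoking $h(b) = 0$, I would obtain
\[
E(t)\,h(t) \leq \int_t^b E(r)\,\omega(r)\,\varphi(r)\,dr,
\]
which after dividing by $E(t)$ gives
\[
h(t) \leq \int_t^b \omega(r)\,\varphi(r)\,\exp\!\Bigl(\int_t^r \psi(s)\omega(s)\,ds\Bigr)\,dr.
\]
Substituting this estimate back into the hypothesis $g(t) \leq \varphi(t) + \psi(t) h(t)$ delivers the desired bound (up to the orientation of the exponent as written in the statement).

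No serious obstacle arises: the nonnegativity of $\omega$ and $\psi$ guarantees that the integrating factor is well defined and that the inequality sign is preserved under multiplication; the absolute continuity of $h$ follows from integrability of $\omega g$, which is built into the hypothesis. The only care needed is to track signs consistently, since $h'$ points \emph{backward} in time so the monotonicity of $Eh$ must be integrated from $t$ up to $b$ rather than from $0$ up to $t$. If one prefers to avoid differentiability issues entirely, an equivalent proof proceeds by iterating the inequality into its own right-hand side and summing the resulting series, which produces the same exponential factor term by term.
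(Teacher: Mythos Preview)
Your argument is correct and is the standard backward-Gronwall proof via an integrating factor. The paper does not actually supply a proof of this lemma at all; it is simply stated and then used later (in the estimate for $\xi_n$ inside Lemma~\ref{lem3} and in the proof of Theorem~\ref{thm1}). So there is nothing to compare against, and your derivation fills the gap.

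Your remark about the orientation of the exponent is also well taken: the computation naturally produces $\exp\bigl(\int_t^r \psi(s)\omega(s)\,ds\bigr)$ with $r\geq t$, whereas the statement in the paper writes $\int_r^t$, which appears to be a typo (the exponent should be nonnegative for the inequality to be the genuine Gronwall bound). Your caveat about the integrability of $\omega g$ is likewise appropriate; the hypothesis implicitly assumes the integral $\int_t^b \omega(r)g(r)\,dr$ is finite, which is exactly what is needed for $h$ to be absolutely continuous.
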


\begin{lemma}
\label{lem3}Let $G_{n}\in L^{2}\left(  0,T;L\left(  \mathfrak{X}\right)
\right)  $ and $\eta_{n},\eta\in\mathfrak{X}$ such that%
\begin{equation}
\left\{
\begin{tabular}
[c]{lll}%
$G_{n}$ & is uniformly bounded in & $L^{2}\left(  0,T;L\left(  \mathfrak{X}%
\right)  \right)  ,$\\
$\eta_{n}\rightharpoonup\eta$ & weakly in & $\mathfrak{X},$ as $n\rightarrow
\infty,$%
\end{tabular}
\ \ \ \ \ \ \ \ \ \ \ \right.  \label{m2}%
\end{equation}
then there exists $G\in L^{2}\left(  0,T;L\left(  \mathfrak{X}\right)
\right)  $ such that
\[%
\begin{tabular}
[c]{lll}%
$\mathfrak{T}^{\ast}\left(  T,\cdot;G_{n}\right)  \eta_{n}\rightarrow
\mathfrak{T}^{\ast}\left(  T,\cdot;G\right)  \eta$ & in & $C\left(
0,T;\mathfrak{X}\right)  ,$\\
$\mathfrak{T}\left(  T,\cdot;G_{n}\right)  \eta_{n}\rightarrow\mathfrak{T}%
\left(  T,\cdot;G\right)  \eta$ & in & $C\left(  0,T;\mathfrak{X}\right)  ,$%
\end{tabular}
\ \ \ \ \ \ \ \ \ \ \
\]
as $n\rightarrow\infty.$
\end{lemma}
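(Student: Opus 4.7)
The plan is to exploit the compactness of the semigroup $\mathfrak{U}(t)$ for $t>0$ (hypothesis (S)), which turns weak convergence into strong convergence, together with Gronwall-type bounds derived from the preceding lemma, to pass to the limit in the defining Volterra equations for $\mathfrak{T}$ and $\mathfrak{T}^{\ast}$. Taking norms in
\[
\mathfrak{T}(t,s;G_{n})y=\mathfrak{U}(t-s)y+\int_{s}^{t}\mathfrak{U}(t-r)G_{n}(r)\mathfrak{T}(r,s;G_{n})y\,dr
\]
and invoking the Gronwall lemma along with the uniform $L^{2}$-bound on $G_{n}$ yields $\sup_{n}\Vert\mathfrak{T}(t,s;G_{n})\Vert_{L(\mathfrak{X})}\leq C$ uniformly on $\Delta$, and likewise for $\mathfrak{T}^{\ast}$. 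Hence the families $\{\mathfrak{T}(T,\cdot\,;G_{n})\eta_{n}\}$ and $\{\mathfrak{T}^{\ast}(T,\cdot\,;G_{n})\eta_{n}\}$ are uniformly bounded in $C([0,T];\mathfrak{X})$.

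Next, using separability of $\mathfrak{X}$ and the $L^{2}$-boundedness of $G_{n}$, I would extract, by diagonal extraction over a countable dense subset of $\mathfrak{X}$, a subsequence (still denoted $G_{n}$) and a $G\in L^{2}(0,T;L(\mathfrak{X}))$ such that $G_{n}(\cdot)x\rightharpoonup G(\cdot)x$ weakly in $L^{2}(0,T;\mathfrak{X})$ for every $x\in\mathfrak{X}$. Then Arzela--Ascoli applies to the family $\{\mathfrak{T}(T,\cdot\,;G_{n})\eta_{n}\}$: equicontinuity follows from the uniform operator-norm continuity of $r\mapsto\mathfrak{U}(r)$ on $[\delta,T]$ for each $\delta>0$, a consequence of compactness of $\mathfrak{U}(t)$ for $t>0$, while pointwise relative compactness follows from compactness of $\mathfrak{U}(T-t)$ acting on bounded sets. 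The limit is identified as $\mathfrak{T}(T,\cdot\,;G)\eta$ by passing to the limit in the Volterra equation: $\mathfrak{U}(T-t)\eta_{n}\to\mathfrak{U}(T-t)\eta$ strongly by compactness combined with the weak convergence of $\eta_{n}$, and compactness of the integral operator $u\mapsto\int_{\cdot}^{T}\mathfrak{U}(T-r)u(r)\,dr$ from $L^{2}(0,T;\mathfrak{X})$ to $C([0,T];\mathfrak{X})$ converts weak convergence of the integrand into strong convergence of the integral. The argument for $\mathfrak{T}^{\ast}$ is symmetric, and uniqueness of the solution of the limiting Volterra equation upgrades subsequential convergence to full sequence convergence.

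The main obstacle is passing to the limit inside the product $G_{n}(r)\mathfrak{T}(r,t;G_{n})\eta_{n}$ that appears in the integrand: $G_{n}$ converges only weakly and $\mathfrak{T}(r,t;G_{n})\eta_{n}$ only along a subsequence, so their product converges at best weakly. Compactness of $\mathfrak{U}(T-r)$ is the decisive ingredient that converts this weak convergence of the integrand into strong convergence of the integral uniformly in $t$, which is what makes both the equicontinuity estimate and the identification of the limit function go through.
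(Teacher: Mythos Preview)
Your proposal is correct and follows essentially the same route as the paper: diagonal extraction to obtain the weak limit $G$, Gronwall bounds for uniform boundedness, compactness of $\mathfrak{U}(t)$ for $t>0$ to get relative compactness (Arzel\`a--Ascoli) of the trajectories, passage to the limit in the Volterra equation, and uniqueness of the limiting solution to upgrade subsequential to full convergence. The paper treats $\mathfrak{T}^{\ast}$ first and then says the argument for $\mathfrak{T}$ is similar, whereas you do the reverse, but the content is the same.
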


\begin{proof}
Let $\left\{  e_{m}:m\geq1\right\}  $ be a basis of $\mathfrak{X}.$ By our
assumption, there exists $C>0$ such that for all $n\geq1$
\[
\int_{0}^{T}\left\Vert G_{n}\left(  t\right)  \right\Vert _{L\left(
\mathfrak{X}\right)  }^{2}dt\leq C.
\]
It follows that%
\[
\int_{0}^{T}\left\Vert G_{n}\left(  t\right)  e_{m}\right\Vert _{\mathfrak{X}%
}^{2}dt\leq C.
\]
By the \textquotedblright diagonal argument\textquotedblright, we know that
there exists a subsequence, denoted again by $\left\{  G_{n}\left(
\cdot\right)  e_{m}:n\geq1\right\}  $, which is weakly convergent in
$L^{2}\left(  0,T;\mathfrak{X}\right)  $ for all $m\geq1$. Since $\left\{
e_{m}:m\geq1\right\}  $ is dense in $\mathfrak{X}$, we know that the sequence
$\left\{  G_{n}\left(  \cdot\right)  x\right\}  $ is weakly convergent in
$L^{2}\left(  0,T;\mathfrak{X}\right)  $ for all $x\in\mathfrak{X}$ to some
$G\left(  \cdot\right)  x\in L^{2}\left(  0,T;\mathfrak{X}\right)  .$ It is
clear that $G\left(  \cdot\right)  \in L^{2}\left(  0,T;L\left(
\mathfrak{X}\right)  \right)  .$

Denote%
\[
\xi_{n}\left(  t\right)  =\mathfrak{T}^{\ast}\left(  T,t;G_{n}\right)
\eta_{n},\ \ \xi\left(  t\right)  =\mathfrak{T}^{\ast}\left(  T,t;G\right)
\eta,\ \ t\in\left[  0,T\right]  .
\]
It is easily seen that%
\begin{equation}
\xi_{n}\left(  t\right)  =\mathfrak{U}^{\ast}\left(  T-t\right)  \eta_{n}%
+\int_{t}^{T}\mathfrak{U}^{\ast}\left(  T-r\right)  G_{n}\left(  r\right)
\xi_{n}\left(  r\right)  dr,\ \ t\in\left[  0,T\right]  . \label{m3}%
\end{equation}
Then by (\ref{m3}) and the Gronwall inequality,%
\[
\left\Vert \xi_{n}\left(  t\right)  \right\Vert \leq M\left\Vert \eta
_{n}\right\Vert +M\int_{t}^{T}\left\Vert G_{n}\left(  r\right)  \right\Vert
_{L\left(  \mathfrak{X}\right)  }\left\Vert \xi_{n}\left(  r\right)
\right\Vert dr,\ \ t\geq0,
\]
we have%
\begin{align}
\left\Vert \xi_{n}\left(  t\right)  \right\Vert  &  \leq M\left\Vert \eta
_{n}\right\Vert +M^{2}\left\Vert \eta_{n}\right\Vert \nonumber\\
&  \times\int_{t}^{T}\left\Vert G_{n}\left(  r\right)  \right\Vert _{L\left(
\mathfrak{X}\right)  }\exp\left(  M\int_{r}^{t}\left\Vert G_{n}\left(
s\right)  \right\Vert _{L\left(  \mathfrak{X}\right)  }ds\right)  dr.
\label{m33}%
\end{align}
From (\ref{m2}), we have the uniform boundedness of $\left\{  \eta
_{n}\right\}  .$ So from (\ref{m33}) one obtains the uniform boundedness of
$\left\{  \xi_{n}\left(  \cdot\right)  \right\}  $ in $C\left(
0,T;\mathfrak{X}\right)  $ and the uniform boundedness of $\left\{
G_{n}\left(  \cdot\right)  \xi_{n}\left(  \cdot\right)  \right\}  $ in
$L^{2}\left(  0,T;\mathfrak{X}\right)  $. Thus, having in mind compactness of
$\mathfrak{U}\left(  t\right)  ,$ $t>0$, one can show that $\left\{  \xi
_{n}\left(  \cdot\right)  \right\}  $ is relatively compact in $C\left(
0,T;\mathfrak{X}\right)  $. Let $\xi\left(  \cdot\right)  $ be any limit point
of $\left\{  \xi_{n}\left(  \cdot\right)  \right\}  $ in $C\left(
0,T;\mathfrak{X}\right)  .$ On the other hand, for any $r\in\left[
0,T\right]  $%
\[
\mathfrak{U}^{\ast}\left(  T-r\right)  G_{n}\left(  r\right)  \xi_{n}\left(
r\right)  \rightarrow\mathfrak{U}^{\ast}\left(  T-r\right)  G\left(  r\right)
\xi\left(  r\right)  \ \text{in }\mathfrak{X}.
\]
Passing to the limit in (\ref{m3}) along some proper subsequence, we see that
$\xi\left(  \cdot\right)  $ satisfies%
\begin{equation}
\xi\left(  t\right)  =\mathfrak{U}^{\ast}\left(  T-t\right)  \eta+\int_{t}%
^{T}\mathfrak{U}^{\ast}\left(  T-r\right)  G\left(  r\right)  \xi\left(
r\right)  dr,\ \ t\in\left[  0,T\right]  . \label{m4}%
\end{equation}
By uniqueness of the solutions to (\ref{m4}), we obtain that the whole
sequence $\left\{  \xi_{n}\left(  \cdot\right)  \right\}  $ converges to
$\xi\left(  \cdot\right)  $ in $C\left(  0,T;\mathfrak{X}\right)  $.
Similarly, we may prove that
\[
\mathfrak{T}\left(  T,\cdot;G_{n}\right)  \eta_{n}\rightarrow\mathfrak{T}%
\left(  T,\cdot;G\right)  \eta\text{ in }C\left(  0,T;\mathfrak{X}\right)
\text{ as }n\rightarrow\infty.
\]

\end{proof}

\begin{lemma}
\label{lem4}Let $G_{n}\in L^{2}\left(  0,T;L\left(  \mathfrak{X}\right)
\right)  $ and $\eta_{n},\eta\in\mathfrak{X}$ such that%
\[
\left\{
\begin{tabular}
[c]{lll}%
$G_{n}$ & is uniformly bounded in & $L^{2}\left(  0,T;L\left(  \mathfrak{X}%
\right)  \right)  ,$\\
$\eta_{n}\rightharpoonup\eta$ & weakly in & $\mathfrak{X},$ as $n\rightarrow
\infty,$%
\end{tabular}
\ \ \ \ \ \ \ \ \ \right.
\]
then there exists $G\in L^{2}\left(  0,T;L\left(  \mathfrak{X}\right)
\right)  $ such that%
\[
\Gamma_{0}^{T}\left(  G_{n}\right)  \eta_{n}\rightarrow\Gamma_{0}^{T}\left(
G\right)  \eta\ \ \ \text{in\ \ }\mathfrak{X},\ \text{as\ \ }n\rightarrow
\infty,
\]
where%
\begin{align*}
\Gamma_{0}^{T}\left(  G_{n}\right)  \eta_{n}  &  =\int_{0}^{T}\mathfrak{T}%
\left(  T,s;G_{n}\right)  BB^{\ast}\mathfrak{T}^{\ast}\left(  T,s;G_{n}%
\right)  \eta_{n}ds,\\
\Gamma_{0}^{T}\left(  G\right)  \eta &  =\int_{0}^{T}\mathfrak{T}\left(
T,s;G\right)  BB^{\ast}\mathfrak{T}^{\ast}\left(  T,s;G\right)  \eta ds.
\end{align*}

\end{lemma}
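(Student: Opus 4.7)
The plan is to reduce Lemma \ref{lem4} to Lemma \ref{lem3} by splitting the difference $\Gamma_{0}^{T}(G_{n})\eta_{n}-\Gamma_{0}^{T}(G)\eta$ into two pieces, one handled by uniform convergence and one by pointwise convergence combined with dominated convergence.

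First, apply Lemma \ref{lem3} to $\{G_{n}\}$ and $\eta_{n}\rightharpoonup\eta$. This produces a subsequence (still denoted $G_{n}$) and $G\in L^{2}(0,T;L(\mathfrak{X}))$ such that $\mathfrak{T}^{\ast}(T,\cdot;G_{n})\eta_{n}\to\mathfrak{T}^{\ast}(T,\cdot;G)\eta$ in $C([0,T];\mathfrak{X})$. Set
\[
w_{n}(s):=BB^{\ast}\mathfrak{T}^{\ast}(T,s;G_{n})\eta_{n},\qquad w(s):=BB^{\ast}\mathfrak{T}^{\ast}(T,s;G)\eta.
\]
Boundedness of $B$ then gives $w_{n}\to w$ uniformly on $[0,T]$. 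Noting that $\int_{0}^{T}\mathfrak{T}(T,s;G)w(s)\,ds=\Gamma_{0}^{T}(G)\eta$, I add and subtract $\int_{0}^{T}\mathfrak{T}(T,s;G_{n})w(s)\,ds$ to obtain
\[
\Gamma_{0}^{T}(G_{n})\eta_{n}-\Gamma_{0}^{T}(G)\eta=\int_{0}^{T}\mathfrak{T}(T,s;G_{n})\bigl[w_{n}(s)-w(s)\bigr]ds+\int_{0}^{T}\bigl[\mathfrak{T}(T,s;G_{n})-\mathfrak{T}(T,s;G)\bigr]w(s)\,ds.
\]

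Second, estimate each term. Gronwall's inequality applied to the integral equation defining $\mathfrak{T}$, combined with the uniform $L^{2}$-bound on $\{G_{n}\}$ (exactly as in estimate (\ref{m33})), yields $\sup_{n,\,s}\|\mathfrak{T}(T,s;G_{n})\|_{L(\mathfrak{X})}\leq C$. Consequently the first term is bounded by $CT\,\|w_{n}-w\|_{C([0,T];\mathfrak{X})}\to0$. For the second term, I apply Lemma \ref{lem3} a second time along the same subsequence $\{G_{n}\}$ but with the trivially weakly convergent constant sequence $\eta_{n}^{\prime}=w(s_{0})$, for each fixed $s_{0}\in[0,T]$. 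This is legitimate because the extraction of $G$ in the proof of Lemma \ref{lem3} is carried out through a diagonal argument on a basis of $\mathfrak{X}$ and depends only on $\{G_{n}\}$, not on the target vectors; hence the same subsequence and the same $G$ serve both applications. Lemma \ref{lem3} then gives the pointwise convergence $\mathfrak{T}(T,s;G_{n})w(s)\to\mathfrak{T}(T,s;G)w(s)$ in $\mathfrak{X}$ for each $s\in[0,T]$, while the integrand is dominated by $C\|w\|_{C([0,T];\mathfrak{X})}$; so the Bochner version of dominated convergence drives the second term to zero as well.

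Combining the two estimates gives $\Gamma_{0}^{T}(G_{n})\eta_{n}\to\Gamma_{0}^{T}(G)\eta$ in $\mathfrak{X}$, as required. The only delicate point I anticipate is the bookkeeping in the second step, namely verifying that a single subsequence of $\{G_{n}\}$ together with a single limit operator $G$ governs both the adjoint-evolution convergence for $\eta_{n}$ and the forward-evolution convergence for the fixed reference vectors $w(s_{0})$; once this is observed, the rest is a routine split-and-estimate argument using the uniform bound from Gronwall and dominated convergence.
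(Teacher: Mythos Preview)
Your proposal is correct and follows essentially the same approach as the paper: the paper also splits $\Gamma_{0}^{T}(G_{n})\eta_{n}-\Gamma_{0}^{T}(G)\eta$ by adding and subtracting $\int_{0}^{T}\mathfrak{T}(T,s;G_{n})BB^{\ast}\mathfrak{T}^{\ast}(T,s;G)\eta\,ds$, bounds the first piece using the uniform bound on $\mathfrak{T}(T,s;G_{n})$ together with the $C([0,T];\mathfrak{X})$-convergence of $\mathfrak{T}^{\ast}(T,s;G_{n})\eta_{n}$ from Lemma~\ref{lem3}, and handles the second piece via the forward-evolution convergence of Lemma~\ref{lem3} applied to the fixed vectors $BB^{\ast}\mathfrak{T}^{\ast}(T,s;G)\eta$. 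Your write-up is in fact more explicit than the paper's about the delicate point that the same subsequence and the same limit $G$ govern both applications of Lemma~\ref{lem3}, and about invoking dominated convergence for the second integral.
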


\begin{proof}
The desired convergence follows from Lemma \ref{lem3}, boundednes of
$\mathfrak{T}\left(  T,s;G_{n}\right)  $ and from the following inequality%
\begin{align*}
&  \left\Vert \Gamma_{0}^{T}\left(  G_{n}\right)  \eta_{n}-\Gamma_{0}%
^{T}\left(  G\right)  \eta\right\Vert \\
&  =\left\Vert \int_{0}^{T}\mathfrak{T}\left(  T,s;G_{n}\right)  BB^{\ast
}\mathfrak{T}^{\ast}\left(  T,s;G_{n}\right)  \eta_{n}ds-\int_{0}%
^{T}\mathfrak{T}\left(  T,s;G\right)  BB^{\ast}\mathfrak{T}^{\ast}\left(
T,s;G\right)  \eta ds\right\Vert \\
&  \leq\left\Vert \int_{0}^{T}\mathfrak{T}\left(  T,s;G_{n}\right)  BB^{\ast
}\mathfrak{T}^{\ast}\left(  T,s;G_{n}\right)  \eta_{n}ds-\int_{0}%
^{T}\mathfrak{T}\left(  T,s;G_{n}\right)  BB^{\ast}\mathfrak{T}^{\ast}\left(
T,s;G\right)  \eta ds\right\Vert \\
&  +\left\Vert \int_{0}^{T}\mathfrak{T}\left(  T,s;G_{n}\right)  BB^{\ast
}\mathfrak{T}^{\ast}\left(  T,s;G\right)  \eta ds-\int_{0}^{T}\mathfrak{T}%
\left(  T,s;G\right)  BB^{\ast}\mathfrak{T}^{\ast}\left(  T,s;G\right)  \eta
ds\right\Vert \\
&  \leq\int_{0}^{T}\left\Vert \mathfrak{T}\left(  T,s;G_{n}\right)  BB^{\ast
}\right\Vert \left\Vert \mathfrak{T}^{\ast}\left(  T,s;G_{n}\right)  \eta
_{n}-\mathfrak{T}^{\ast}\left(  T,s;G\right)  \eta\right\Vert ds\\
&  +\int_{0}^{T}\left\Vert \left[  \mathfrak{T}\left(  T,s;G_{n}\right)
-\mathfrak{T}\left(  T,s;G\right)  \right]  BB^{\ast}\mathfrak{T}^{\ast
}\left(  T,s;G\right)  \eta\right\Vert ds.
\end{align*}

\end{proof}

\begin{lemma}
\label{lem5}Let $G_{n}\in L^{2}\left(  0,T;L\left(  \mathfrak{X}\right)
\right)  $ such that%
\[%
\begin{tabular}
[c]{lll}%
$G_{n}$ & is uniformly bounded in & $L^{2}\left(  0,T;L\left(  \mathfrak{X}%
\right)  \right)  ,$%
\end{tabular}
\ \ \ \
\]
then there exists $G\in L^{2}\left(  0,T;L\left(  \mathfrak{X}\right)
\right)  $ such that
\begin{align}
\lim_{n\rightarrow\infty}\left\Vert \varepsilon\left(  \varepsilon
I+\Gamma_{0}^{T}\left(  G_{n}\right)  \right)  ^{-1}\pi_{M}-\varepsilon\left(
\varepsilon I+\Gamma_{0}^{T}\left(  G\right)  \right)  ^{-1}\pi_{M}%
\right\Vert  &  =0,\label{qq1}\\
\lim_{n\rightarrow\infty}\left\Vert \left(  \varepsilon\left(  I-\pi
_{M}\right)  +\Gamma_{0}^{T}\left(  G_{n}\right)  \right)  ^{-1}%
h\rightarrow\left(  \varepsilon\left(  I-\pi_{M}\right)  +\Gamma_{0}%
^{T}\left(  G\right)  \right)  ^{-1}h\right\Vert  &  =0, \label{qq2}%
\end{align}
$\ $for any $h\in\mathfrak{X.}$
\end{lemma}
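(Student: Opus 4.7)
The plan is first to invoke Lemma \ref{lem4} to produce (along a subsequence) a limit operator $G\in L^{2}(0,T;L(\mathfrak{X}))$ such that $\Gamma_{0}^{T}(G_{n})\eta\to \Gamma_{0}^{T}(G)\eta$ in $\mathfrak{X}$ for every $\eta\in\mathfrak{X}$. This fixes the candidate $G$; the remaining task is to upgrade this pointwise convergence of the Gramians into the two resolvent-like convergences stated.

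For (\ref{qq1}), the crucial observation is that since $\pi_{M}$ has finite-dimensional range, the operator $(\varepsilon I+\Gamma_{0}^{T}(G))^{-1}\pi_{M}$ sends the unit ball of $\mathfrak{X}$ into a bounded subset of the fixed finite-dimensional subspace $(\varepsilon I+\Gamma_{0}^{T}(G))^{-1}(M)$, hence into a relatively compact set. Applying the standard resolvent identity
\[
\varepsilon(\varepsilon I+\Gamma_{0}^{T}(G_{n}))^{-1}\pi_{M}-\varepsilon(\varepsilon I+\Gamma_{0}^{T}(G))^{-1}\pi_{M}=\varepsilon(\varepsilon I+\Gamma_{0}^{T}(G_{n}))^{-1}[\Gamma_{0}^{T}(G)-\Gamma_{0}^{T}(G_{n})](\varepsilon I+\Gamma_{0}^{T}(G))^{-1}\pi_{M},
\]
the outer factor has norm at most $1$ uniformly in $n$, and the pointwise convergence $[\Gamma_{0}^{T}(G)-\Gamma_{0}^{T}(G_{n})]\eta\to 0$ from Lemma \ref{lem4}, combined with the uniform operator-norm bound on $\Gamma_{0}^{T}(G_{n})$, promotes automatically to uniform convergence on the relatively compact range described above. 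This delivers convergence in operator norm and hence (\ref{qq1}).

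For (\ref{qq2}), I would use the factorization (\ref{f2}):
\[
(\varepsilon(I-\pi_{M})+\Gamma_{0}^{T}(G_{n}))^{-1}h=(I-\varepsilon(\varepsilon I+\Gamma_{0}^{T}(G_{n}))^{-1}\pi_{M})^{-1}(\varepsilon I+\Gamma_{0}^{T}(G_{n}))^{-1}h.
\]
For fixed $h$, the resolvent identity with $\eta:=(\varepsilon I+\Gamma_{0}^{T}(G))^{-1}h$ and Lemma \ref{lem4} give $\|(\varepsilon I+\Gamma_{0}^{T}(G_{n}))^{-1}h-(\varepsilon I+\Gamma_{0}^{T}(G))^{-1}h\|\to 0$. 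Writing $A_{n}:=\varepsilon(\varepsilon I+\Gamma_{0}^{T}(G_{n}))^{-1}\pi_{M}$ and $A$ for the corresponding limit, (\ref{qq1}) yields $\|A_{n}-A\|\to 0$, while Theorem \ref{thm:2} ensures $\|A\|<1$, so for large $n$ the $A_{n}$ are uniform strict contractions and the identity $(I-A_{n})^{-1}-(I-A)^{-1}=(I-A_{n})^{-1}(A_{n}-A)(I-A)^{-1}$ gives $\|(I-A_{n})^{-1}-(I-A)^{-1}\|\to 0$. A standard triangle-inequality decomposition of the product then establishes (\ref{qq2}).

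The main obstacle is precisely the passage from strong to operator-norm convergence in (\ref{qq1}): this step crucially exploits the finite-dimensionality of $M$, since in its absence strong convergence of the Gramians need not lift to norm convergence of the associated resolvents. Once (\ref{qq1}) is in hand, (\ref{qq2}) follows from essentially algebraic manipulations exploiting (\ref{f2}) together with the strict contractivity provided by Theorem \ref{thm:2}.
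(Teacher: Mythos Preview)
Your proposal is correct and follows essentially the same approach as the paper. For (\ref{qq1}) both you and the paper use the resolvent identity together with the finite-dimensionality of $M$ to pass from the strong convergence of Lemma~\ref{lem4} to norm convergence; the paper phrases this via an approximating sequence $h_{m}$ with $\pi_{M}h_{m}\to h_{0}\in M$, which is just another way of exploiting the relative compactness you describe. For (\ref{qq2}) the paper applies the resolvent identity directly to $\bigl(\varepsilon(I-\pi_{M})+\Gamma_{0}^{T}(\cdot)\bigr)^{-1}$ and then uses (\ref{f2}) together with (\ref{qq1}) only to bound $\bigl\Vert(\varepsilon(I-\pi_{M})+\Gamma_{0}^{T}(G_{n}))^{-1}\bigr\Vert$ uniformly, whereas you first factor through (\ref{f2}) and prove convergence of each factor separately; the ingredients (Lemma~\ref{lem4}, Theorem~\ref{thm:2}, formula (\ref{f2})) are identical and the difference is purely organizational.
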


\begin{proof}
Set $\gamma_{n}:=\left\Vert \varepsilon\left(  \varepsilon I+\Gamma_{0}%
^{T}\left(  G_{n}\right)  \right)  ^{-1}\pi_{M}-\varepsilon\left(  \varepsilon
I+\Gamma_{0}^{T}\left(  G\right)  \right)  ^{-1}\pi_{M}\right\Vert _{L\left(
\mathfrak{X}\right)  }.$ There exists $\left\{  h_{m}\in\mathfrak{X}%
:\left\Vert h_{m}\right\Vert =1\right\}  $ such that
\[
\gamma_{n,m}:=\left\Vert \left(  \varepsilon\left(  \varepsilon I+\Gamma
_{0}^{T}\left(  G_{n}\right)  \right)  ^{-1}\pi_{M}-\varepsilon\left(
\varepsilon I+\Gamma_{0}^{T}\left(  G\right)  \right)  ^{-1}\pi_{M}\right)
h_{m}\right\Vert ,\ \ \gamma_{n,m}\rightarrow\gamma_{n}%
\]
$\ $as $m\rightarrow\infty$. Since $\left\{  \pi_{M}h_{m}\right\}  $ is a
sequence of finite dimensional vectors and $\left\Vert \pi_{M}h_{m}\right\Vert
\leq1$, then there is a subsequence denoted by $\left\{  \pi_{M}h_{m}\right\}
$again, such that $\pi_{M}h_{m}\rightarrow h_{0}\in M\ $as $m\rightarrow
\infty$. It follows that
\begin{align*}
z_{n,m}  &  :=\left(  \varepsilon\left(  \varepsilon I+\Gamma_{0}^{T}\left(
G_{n}\right)  \right)  ^{-1}\pi_{M}-\varepsilon\left(  \varepsilon
I+\Gamma_{0}^{T}\left(  G\right)  \right)  ^{-1}\pi_{M}\right)  h_{m}\\
&  =\varepsilon\left(  \varepsilon I+\Gamma_{0}^{T}\left(  G_{n}\right)
\right)  ^{-1}\left(  \Gamma_{0}^{T}\left(  G\right)  -\Gamma_{0}^{T}\left(
G_{n}\right)  \right)  \left(  \varepsilon I+\Gamma_{0}^{T}\left(  G\right)
\right)  ^{-1}\pi_{M}h_{m}\\
&  \rightarrow\varepsilon\left(  \varepsilon I+\Gamma_{0}^{T}\left(
G_{n}\right)  \right)  ^{-1}\left(  \Gamma_{0}^{T}\left(  G\right)
-\Gamma_{0}^{T}\left(  G_{n}\right)  \right)  \left(  \varepsilon I+\Gamma
_{0}^{T}\left(  G\right)  \right)  h_{0}:=z_{n}\ \
\end{align*}
as\ $m\rightarrow\infty,$ and $\lim_{m\rightarrow\infty}\gamma_{n,m}%
=\left\Vert z_{n}\right\Vert =\gamma_{n},\ \ \ \left\Vert z_{n,m}\right\Vert
\leq2.$ By Lemma \ref{lem4} we have%
\begin{align*}
\gamma_{n}  &  =\left\Vert z_{n}\right\Vert =\left\Vert \varepsilon\left(
\varepsilon I+\Gamma_{0}^{T}\left(  G_{n}\right)  \right)  ^{-1}\left(
\Gamma_{0}^{T}\left(  G\right)  -\Gamma_{0}^{T}\left(  G_{n}\right)  \right)
\left(  \varepsilon I+\Gamma_{0}^{T}\left(  G\right)  \right)  ^{-1}%
h_{0}\right\Vert \\
&  \leq\left\Vert \varepsilon\left(  \varepsilon I+\Gamma_{0}^{T}\left(
G_{n}\right)  \right)  ^{-1}\right\Vert \left\Vert \left(  \Gamma_{0}%
^{T}\left(  G\right)  -\Gamma_{0}^{T}\left(  G_{n}\right)  \right)  \left(
\varepsilon I+\Gamma_{0}^{T}\left(  G\right)  \right)  ^{-1}h_{0}\right\Vert
\\
&  \leq\left\Vert \left(  \Gamma_{0}^{T}\left(  G\right)  -\Gamma_{0}%
^{T}\left(  G_{n}\right)  \right)  \left(  \varepsilon I+\Gamma_{0}^{T}\left(
G\right)  \right)  ^{-1}h_{0}\right\Vert \rightarrow0\ \ \text{as
}n\rightarrow\infty.
\end{align*}
So $\lim_{n\rightarrow\infty}\lim_{m\rightarrow\infty}\gamma_{n,m}%
=\lim_{n\rightarrow\infty}\gamma_{n}=0$.

For every $h\in\mathfrak{X}$ we have%
\begin{align*}
&  \left(  \varepsilon\left(  I-\pi_{M}\right)  +\Gamma_{0}^{T}\left(
G\right)  \right)  ^{-1}h-\left(  \varepsilon\left(  I-\pi_{M}\right)
+\Gamma_{0}^{T}\left(  G_{n}\right)  \right)  ^{-1}h\\
&  =\left(  \varepsilon\left(  I-\pi_{M}\right)  +\Gamma_{0}^{T}\left(
G_{n}\right)  \right)  ^{-1}\\
&  \times\left[  \left(  \varepsilon\left(  I-\pi_{M}\right)  +\Gamma_{0}%
^{T}\left(  G\right)  -\Gamma_{0}^{T}\left(  G\right)  +\Gamma_{0}^{T}\left(
G_{n}\right)  \right)  \left(  \varepsilon\left(  I-\pi_{M}\right)
+\Gamma_{0}^{T}\left(  G\right)  \right)  ^{-1}-I\right]  h\\
&  =\left(  \varepsilon\left(  I-\pi_{M}\right)  +\Gamma_{0}^{T}\left(
G_{n}\right)  \right)  ^{-1}\left[  I+\left(  \Gamma_{0}^{T}\left(
G_{n}\right)  -\Gamma_{0}^{T}\left(  G\right)  \right)  \left(  \varepsilon
\left(  I-\pi_{M}\right)  +\Gamma_{0}^{T}\left(  G\right)  \right)
^{-1}-I\right]  h\\
&  =\left(  \varepsilon\left(  I-\pi_{M}\right)  +\Gamma_{0}^{T}\left(
G_{n}\right)  \right)  ^{-1}\left(  \Gamma_{0}^{T}\left(  G_{n}\right)
-\Gamma_{0}^{T}\left(  G\right)  \right)  \left(  \varepsilon\left(  I-\pi
_{M}\right)  +\Gamma_{0}^{T}\left(  G\right)  \right)  ^{-1}h.
\end{align*}
By (\ref{qq1}) and Theorem \ref{thm:2} we have%
\begin{align}
&  \lim_{n\rightarrow\infty}\left\Vert \left(  \varepsilon\left(  I-\pi
_{M}\right)  +\Gamma_{0}^{T}\left(  G_{n}\right)  \right)  ^{-1}\right\Vert
\nonumber\\
&  \leq\lim_{n\rightarrow\infty}\left\Vert \left(  I-\varepsilon\left(
\varepsilon I+\Gamma_{0}^{T}\left(  G_{n}\right)  \right)  ^{-1}\pi
_{M}\right)  ^{-1}\right\Vert \left\Vert \left(  \varepsilon I+\Gamma_{0}%
^{T}\left(  G_{n}\right)  \right)  ^{-1}\right\Vert \nonumber\\
&  \leq\frac{1}{\varepsilon}\frac{1}{1-\lim_{n\rightarrow\infty}\left\Vert
\varepsilon\left(  \varepsilon I+\Gamma_{0}^{T}\left(  G_{n}\right)  \right)
^{-1}\pi_{M}\right\Vert }\label{qw1}\\
&  =\frac{1}{\varepsilon}\frac{1}{1-\left\Vert \varepsilon\left(  \varepsilon
I+\Gamma_{0}^{T}\left(  G\right)  \right)  ^{-1}\pi_{M}\right\Vert }%
:=\delta\left(  \varepsilon\right) \nonumber
\end{align}
Now desired convergence (\ref{qw1}) follows from Lemma \ref{lem4}:
\begin{align*}
&  \lim_{n\rightarrow\infty}\left\Vert \left(  \varepsilon\left(  I-\pi
_{M}\right)  +\Gamma_{0}^{T}\left(  G\right)  \right)  ^{-1}h-\left(
\varepsilon\left(  I-\pi_{M}\right)  +\Gamma_{0}^{T}\left(  G_{n}\right)
\right)  ^{-1}h\right\Vert \\
&  \leq\lim_{n\rightarrow\infty}\left\Vert \left(  \varepsilon\left(
I-\pi_{M}\right)  +\Gamma_{0}^{T}\left(  G_{n}\right)  \right)  ^{-1}%
\right\Vert \\
&  \times\lim_{n\rightarrow\infty}\left\Vert \left(  \Gamma_{0}^{T}\left(
G_{n}\right)  -\Gamma_{0}^{T}\left(  G\right)  \right)  \left(  \varepsilon
\left(  I-\pi_{M}\right)  +\Gamma_{0}^{T}\left(  G\right)  \right)
^{-1}h\right\Vert \\
&  \leq\delta\left(  \varepsilon\right)  \lim_{n\rightarrow\infty}\left\Vert
\left(  \Gamma_{0}^{T}\left(  G_{n}\right)  -\Gamma_{0}^{T}\left(  G\right)
\right)  \left(  \varepsilon\left(  I-\pi_{M}\right)  +\Gamma_{0}^{T}\left(
G\right)  \right)  ^{-1}h\right\Vert =0.
\end{align*}

\end{proof}

\begin{lemma}
\label{lem2}Let $z\left(  \cdot\right)  \in C\left(  0,T;\mathfrak{X}\right)
$ and $\mathfrak{T}\left(  t,s;F\left(  z\right)  \right)  $ be the evolution
operator generated by $A+F\left(  z\right)  ,$ where $F$ is defined by
(\ref{p2}) and let%
\begin{align*}
u_{\varepsilon}\left(  t,z\right)   &  =B^{\ast}\mathfrak{T}^{\ast}\left(
T,t;F\left(  z\right)  \right)  \left(  \varepsilon\left(  I-\pi_{M}\right)
+\Gamma_{0}^{T}\left(  F\left(  z\right)  \right)  \right)  ^{-1}\\
&  \times\left(  h-\mathfrak{T}\left(  T,0;F\left(  z\right)  \right)
x_{0}-\int_{0}^{T}\mathfrak{T}\left(  T,s;F\left(  z\right)  \right)  g\left(
s,z\left(  s\right)  \right)  ds\right)  .
\end{align*}
The control $z\rightarrow u_{\varepsilon}\left(  t,z\right)  :C\left(
0,T;\mathfrak{X}\right)  \rightarrow C\left(  0,T;\mathfrak{X}\right)  $ is
continuous and
\begin{gather*}
\left\Vert u_{\varepsilon}\left(  t,z\right)  \right\Vert \leq R_{\varepsilon
}:=\dfrac{1}{\varepsilon\left(  1-\gamma_{\varepsilon}\right)  }%
M_{B}M_{\mathfrak{T}}\left(  \left\Vert h\right\Vert +M_{\mathfrak{T}%
}\left\Vert x_{0}\right\Vert +M_{\mathfrak{T}}T\left\Vert g\right\Vert
_{C}\right)  ,\\
\gamma_{\varepsilon}:=\sup_{z\in C\left(  0,T;\mathfrak{X}\right)  }\left\Vert
\varepsilon\left(  \varepsilon I+\Gamma_{0}^{T}\left(  F\left(  z\right)
\right)  \right)  ^{-1}\pi_{M}\right\Vert <1,\\
M_{\mathfrak{T}}:=\sup\left\{  \left\Vert \mathfrak{T}\left(  t,s;F\left(
z\right)  \right)  \right\Vert :0\leq s\leq t\leq T\right\}  .
\end{gather*}

\end{lemma}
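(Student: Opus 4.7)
The plan is to verify each of the three assertions of the lemma separately: the uniform strict bound $\gamma_\varepsilon < 1$, the norm estimate by $R_\varepsilon$, and the continuity of $z \mapsto u_\varepsilon(\cdot, z)$. I expect the strict inequality to be the main obstacle, since Theorem \ref{thm:2} supplies only a pointwise bound $\|\varepsilon(\varepsilon I + \Gamma_0^T(F(z)))^{-1}\pi_M\| < 1$ for each fixed $z$, whereas $\gamma_\varepsilon$ is a supremum over all of $C(0,T;\mathfrak{X})$.

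I would start with the supremum bound and argue by contradiction. Assume $\gamma_\varepsilon = 1$; then there exists $\{z_n\} \subset C(0,T;\mathfrak{X})$ with $\|\varepsilon(\varepsilon I + \Gamma_0^T(F(z_n)))^{-1}\pi_M\| \to 1$. Condition (F) yields $\|F(t,z_n(t))\|_{L(\mathfrak{X})} \le L$ for all $t$, so $G_n := F(\cdot,z_n(\cdot))$ is uniformly bounded in $L^2(0,T;L(\mathfrak{X}))$. Invoking Lemma \ref{lem5}, there exists $G \in L^2(0,T;L(\mathfrak{X}))$ with $\|\varepsilon(\varepsilon I + \Gamma_0^T(G_n))^{-1}\pi_M - \varepsilon(\varepsilon I + \Gamma_0^T(G))^{-1}\pi_M\| \to 0$, whence $\|\varepsilon(\varepsilon I + \Gamma_0^T(G))^{-1}\pi_M\| = 1$. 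This contradicts Theorem \ref{thm:2}, establishing $\gamma_\varepsilon < 1$.

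Next, for the norm estimate, I would rewrite the resolvent-like operator via identity (\ref{f2}):
\[
(\varepsilon(I-\pi_M) + \Gamma_0^T(F(z)))^{-1} = (I - \varepsilon(\varepsilon I + \Gamma_0^T(F(z)))^{-1}\pi_M)^{-1}(\varepsilon I + \Gamma_0^T(F(z)))^{-1}.
\]
By the Neumann series together with Step 1, its operator norm is bounded by $\tfrac{1}{\varepsilon(1-\gamma_\varepsilon)}$. Combining this with the pointwise bound $\|B^*\mathfrak{T}^*(T,t;F(z))\| \le M_B M_{\mathfrak{T}}$ and the triangle estimate
\[
\left\| h - \mathfrak{T}(T,0;F(z))x_0 - \int_0^T \mathfrak{T}(T,s;F(z)) g(s,z(s))\,ds \right\| \le \|h\| + M_{\mathfrak{T}}\|x_0\| + M_{\mathfrak{T}} T \|g\|_C
\]
in the defining expression for $u_\varepsilon(t,z)$ yields exactly the $R_\varepsilon$ bound stated in the lemma.

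Finally, for continuity, let $z_n \to z$ in $C(0,T;\mathfrak{X})$. The continuity of $F$ together with its uniform bound $L$ implies, via dominated convergence, that $F(\cdot,z_n(\cdot))x \to F(\cdot,z(\cdot))x$ in $L^2(0,T;\mathfrak{X})$ for every $x \in \mathfrak{X}$; in particular $G_n := F(\cdot,z_n(\cdot))$ is uniformly bounded in $L^2(0,T;L(\mathfrak{X}))$. Lemmas \ref{lem3}, \ref{lem4}, and \ref{lem5} then supply, in order, the convergences $\mathfrak{T}(T,\cdot;G_n)\eta_n \to \mathfrak{T}(T,\cdot;G)\eta$ and $\mathfrak{T}^*(T,\cdot;G_n)\eta_n \to \mathfrak{T}^*(T,\cdot;G)\eta$ in $C(0,T;\mathfrak{X})$, $\Gamma_0^T(G_n)\eta_n \to \Gamma_0^T(G)\eta$ in $\mathfrak{X}$, and (via (\ref{qq2})) $(\varepsilon(I-\pi_M) + \Gamma_0^T(G_n))^{-1}h \to (\varepsilon(I-\pi_M) + \Gamma_0^T(G))^{-1}h$ for every $h$. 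Continuity of $g$ and of $z \mapsto z(s)$ gives $g(s,z_n(s)) \to g(s,z(s))$. Plugging these pieces into the formula for $u_\varepsilon(\cdot,z_n)$ and passing to the limit via dominated convergence delivers $u_\varepsilon(\cdot,z_n) \to u_\varepsilon(\cdot,z)$ in $C(0,T;\mathfrak{X})$, completing the proof.
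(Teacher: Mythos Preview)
Your proposal is correct and follows essentially the same route as the paper's proof: the contradiction argument for $\gamma_\varepsilon<1$ via Lemma~\ref{lem5} and Theorem~\ref{thm:2}, the norm estimate through the factorization~(\ref{f2}), and the continuity via the same three-term decomposition relying on Lemmas~\ref{lem3}--\ref{lem5}. The only cosmetic difference is that you handle the three assertions in the reverse order and invoke the auxiliary lemmas more explicitly for the continuity step, whereas the paper simply asserts continuity of $\mathfrak{T}(T,s;F(z))$ and $h(z)$ in $z$ before writing the decomposition.
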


\begin{proof}
To prove continuity of $u_{\varepsilon}\left(  \cdot,z\right)  ,$ let
$\left\{  z_{n}\right\}  \subset C\left(  0,T;\mathfrak{X}\right)  $ with
$z_{n}\rightarrow z$ in $C\left(  0,T;\mathfrak{X}\right)  $. By assumption
(A2) and (A3) the functions $F\left(  z\right)  $ and $g\left(  s,z\left(
s\right)  \right)  $ are continuous. It follows that $\mathfrak{T}\left(
T,s;F\left(  z\right)  \right)  $ and%
\[
h\left(  z\right)  :=h-\mathfrak{T}\left(  T,0;F\left(  z\right)  \right)
x_{0}-\int_{0}^{T}\mathfrak{T}\left(  T,s;F\left(  z\right)  \right)  g\left(
s,z\left(  s\right)  \right)  ds
\]
are continuous in $z$. Then from the following equality%
\begin{align*}
&  u_{\varepsilon}\left(  t,z_{n}\right)  -u_{\varepsilon}\left(  t,z\right)
\\
&  =B^{\ast}\mathfrak{T}^{\ast}\left(  T,t;F\left(  z_{n}\right)  \right)
\left(  \varepsilon\left(  I-\pi_{M}\right)  +\Gamma_{0}^{T}\left(  F\left(
z_{n}\right)  \right)  \right)  ^{-1}\left(  h\left(  z_{n}\right)  -h\left(
z\right)  \right) \\
&  +B^{\ast}\mathfrak{T}^{\ast}\left(  T,t;F\left(  z_{n}\right)  \right) \\
&  \times\left[  \left(  \varepsilon\left(  I-\pi_{M}\right)  +\Gamma_{0}%
^{T}\left(  F\left(  z_{n}\right)  \right)  \right)  ^{-1}-\left(
\varepsilon\left(  I-\pi_{M}\right)  +\Gamma_{0}^{T}\left(  F\left(  z\right)
\right)  \right)  ^{-1}\right]  h\left(  z\right) \\
&  +\left[  B^{\ast}\mathfrak{T}^{\ast}\left(  T,t;F\left(  z_{n}\right)
\right)  -B^{\ast}\mathfrak{T}^{\ast}\left(  T,t;F\left(  z\right)  \right)
\right]  \left(  \varepsilon\left(  I-\pi_{M}\right)  +\Gamma_{0}^{T}\left(
F\left(  z\right)  \right)  \right)  ^{-1}h\left(  z\right)
\end{align*}
it follows that $u_{\varepsilon}\left(  t,z_{n}\right)  \rightarrow
u_{\varepsilon}\left(  t,z\right)  $ as $n\rightarrow\infty$ in $C\left(
0,T;\mathfrak{X}\right)  $. Moreover,%
\begin{align*}
\left\Vert u_{\varepsilon}\left(  t,z\right)  \right\Vert  &  \leq
M_{B}M_{\mathfrak{T}}\dfrac{1}{1-\left\Vert \varepsilon\left(  \varepsilon
I+\Gamma_{0}^{T}\left(  F\left(  z\right)  \right)  \right)  ^{-1}\pi
_{M}\right\Vert }\\
&  \times\frac{1}{\varepsilon}\left\Vert h-\mathfrak{T}\left(  T,0;F\left(
z\right)  \right)  x_{0}-\int_{0}^{T}\mathfrak{T}\left(  T,s;F\left(
z\right)  \right)  g\left(  s,z\left(  s\right)  \right)  ds\right\Vert \\
&  \leq\dfrac{1}{\varepsilon\left(  1-\gamma_{\varepsilon}\right)  }%
M_{B}M_{\mathfrak{T}}\left(  \left\Vert h\right\Vert +M_{\mathfrak{T}%
}\left\Vert x_{0}\right\Vert +M_{\mathfrak{T}}T\left\Vert m\right\Vert
_{C}\right)  :=R_{\varepsilon},
\end{align*}
where
\[
\gamma_{\varepsilon}=\sup_{z\in C\left(  0,T;\mathfrak{X}\right)  }\left\Vert
\varepsilon\left(  \varepsilon I+\Gamma_{0}^{T}\left(  F\left(  z\right)
\right)  \right)  ^{-1}\pi_{M}\right\Vert .
\]
Show that $\gamma_{\varepsilon}<1.$ Contrary, assume that there exists a
sequence $\left\{  z_{n}\right\}  $ such that
\[
\lim_{n\rightarrow\infty}\gamma_{\varepsilon}\left(  z_{n}\right)
=1,\ \ \ \ \ \gamma_{\varepsilon}\left(  z_{n}\right)  =\left\Vert
\varepsilon\left(  \varepsilon I+\Gamma_{0}^{T}\left(  F\left(  z_{n}\right)
\right)  \right)  ^{-1}\pi_{M}\right\Vert .
\]
Then since $\left\Vert F\left(  t,z_{n}\left(  t\right)  \right)  \right\Vert
\leq L,$ $F\left(  \cdot,z_{n}\left(  \cdot\right)  \right)  \in L^{2}\left(
0,T;L\left(  \mathfrak{X}\right)  \right)  ,$ then by Lemma \ref{lem5} there
exists $\widetilde{F}\in L^{2}\left(  0,T;L\left(  \mathfrak{X}\right)
\right)  $ such that%
\[
\lim_{n\rightarrow\infty}\gamma_{\varepsilon}\left(  z_{n}\right)
=\lim_{n\rightarrow\infty}\left\Vert \varepsilon\left(  \varepsilon
I+\Gamma_{0}^{T}\left(  F\left(  z_{n}\right)  \right)  \right)  ^{-1}\pi
_{M}\right\Vert =\left\Vert \varepsilon\left(  \varepsilon I+\Gamma_{0}%
^{T}\left(  \widetilde{F}\right)  \right)  ^{-1}\pi_{M}\right\Vert <1,
\]
which is contradiction.
\end{proof}

\begin{theorem}
\label{thm:5}For any $\varepsilon>0$ the operator $\Theta_{\varepsilon}\left(
z\right)  $ has a fixed point in $C\left(  0,T;\mathfrak{X}\right)  $.
\end{theorem}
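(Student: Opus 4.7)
The natural strategy is to apply Schauder's fixed point theorem to $\Theta_{\varepsilon}$ on a suitable closed ball in $C(0,T;\mathfrak{X})$. First, using Lemma \ref{lem2}, the bound $\|g(s,z(s))\|\leq m(s)$ from assumption (G), the uniform bound $M_{\mathfrak{T}}$ on the evolution operator, and $\|B\|\leq M_{B}$, one derives
\[
\|(\Theta_{\varepsilon}z)(t)\| \leq M_{\mathfrak{T}}\|x_{0}\| + M_{\mathfrak{T}}T\bigl(M_{B}R_{\varepsilon}+\|m\|_{C}\bigr) =: R_{0},
\]
uniformly in $z\in C(0,T;\mathfrak{X})$. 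Hence $\Theta_{\varepsilon}$ sends the closed ball $B_{R_{0}}=\{z\in C(0,T;\mathfrak{X}):\|z\|_{C}\leq R_{0}\}$ into itself.

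Continuity of $\Theta_{\varepsilon}$ on $B_{R_{0}}$ follows by combining Lemma \ref{lem2} (continuity of $z\mapsto u_{\varepsilon}(\cdot,z)$), Lemma \ref{lem3} (continuity of $\mathfrak{T}(\cdot,\cdot;F(z))$ with respect to $z$), and continuity of $g$ in the second variable. If $z_{n}\to z$ in $C(0,T;\mathfrak{X})$, then $F(\cdot,z_{n}(\cdot))\to F(\cdot,z(\cdot))$ uniformly with the uniform bound $\|F(\cdot,z_{n}(\cdot))\|_{L(\mathfrak{X})}\leq L$, so Lemma \ref{lem3} applies, and the claim follows by routine triangle-inequality estimates on the defining integral of $\Theta_{\varepsilon}$.

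The crucial step is showing $\Theta_{\varepsilon}(B_{R_{0}})$ is relatively compact in $C(0,T;\mathfrak{X})$. By Arzel\`a--Ascoli this reduces to (a) equicontinuity in $t$ and (b) pointwise relative compactness in $\mathfrak{X}$. For (b), fix $t\in(0,T]$ and split, for small $\delta\in(0,t)$,
\[
(\Theta_{\varepsilon}z)(t) = \mathfrak{T}(t,0;F(z))x_{0} + \int_{0}^{t-\delta}\mathfrak{T}(t,s;F(z))\bigl[Bu_{\varepsilon}(s,z)+g(s,z(s))\bigr]\,ds + \int_{t-\delta}^{t}\mathfrak{T}(t,s;F(z))\bigl[Bu_{\varepsilon}(s,z)+g(s,z(s))\bigr]\,ds.
\]
The last integral is bounded in norm by $M_{\mathfrak{T}}\delta(M_{B}R_{\varepsilon}+\|m\|_{C})$, uniformly small in $z$. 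The first two terms factor through $\mathfrak{U}(\sigma)$ with $\sigma\geq\delta>0$, which is compact by assumption (S), so their values lie in a relatively compact subset of $\mathfrak{X}$; letting $\delta\to 0$ yields (b). Equicontinuity (a) is obtained by analogous splittings combined with strong continuity of $\mathfrak{U}(\cdot)$ and the compactness-induced norm continuity of $t\mapsto\mathfrak{T}(t,s;F(z))$ for $t>s$, using the uniform bound on the integrand over $z\in B_{R_{0}}$.

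The main obstacle is carrying out (a) and (b) \emph{uniformly in $z\in B_{R_{0}}$}; this is handled by noting that $\mathfrak{T}(t,s;F(z))$ inherits compactness for $t>s$ from $\mathfrak{U}$ via its defining Volterra-type integral equation, and that all relevant norms are controlled uniformly because $\|F(\cdot,z(\cdot))\|_{L(\mathfrak{X})}\leq L$. Once compactness of $\Theta_{\varepsilon}$ and the self-mapping property on $B_{R_{0}}$ are in hand, Schauder's fixed point theorem delivers the desired $y_{\varepsilon}^{\ast}$.
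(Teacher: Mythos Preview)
Your proposal is correct and follows essentially the same route as the paper: Schauder's fixed point theorem via a uniform bound on $\Theta_{\varepsilon}$, continuity of $\Theta_{\varepsilon}$, and relative compactness of the image established through Arzel\`a--Ascoli (equicontinuity plus pointwise relative compactness in $\mathfrak{X}$). The only cosmetic differences are that the paper works on all of $C(0,T;\mathfrak{X})$ rather than a ball (invoking the bounded-image form of Schauder), and in Claim~4 it factors the integral through the perturbed evolution operator $\mathfrak{T}(t,t-\delta;F(z))$ rather than through $\mathfrak{U}(\sigma)$ as you do; both factorizations rely on the same underlying compactness inherited from $\mathfrak{U}$.
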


\begin{proof}
Claim1. The operator $\Theta_{\varepsilon}\left(  z\right)  $ sends $C\left(
0,T;\mathfrak{X}\right)  $ into a bounded set.

We need to show that, for any $\varepsilon>0$ there exists $k\left(
\varepsilon\right)  >0$ such that $\left\Vert \left(  \Theta_{\varepsilon
}z\right)  \left(  t\right)  \right\Vert \leq k\left(  \varepsilon\right)  $
for all $z\left(  \cdot\right)  \in C\left(  0,T;\mathfrak{X}\right)  .$
Indeed,%
\begin{align*}
\left\Vert \left(  \Theta_{\varepsilon}z\right)  \left(  t\right)
\right\Vert  &  \leq\left\Vert \mathfrak{T}\left(  t,0;F\left(  z\right)
\right)  \right\Vert \left\Vert y_{0}\right\Vert \\
&  +\int_{0}^{t}\left\Vert \mathfrak{T}\left(  t,s;F\left(  z\right)  \right)
\right\Vert \left[  \left\Vert B\right\Vert \left\Vert u_{\varepsilon}\left(
s,z\right)  \right\Vert +\left\Vert g\left(  s,z\left(  s\right)  \right)
\right\Vert \right]  ds\\
&  \leq L\left\Vert y_{0}\right\Vert +LM_{B}\left(  R_{\varepsilon
}+T\left\Vert m\right\Vert _{C}\right)  =:k\left(  \varepsilon\right)  .
\end{align*}

Claim 2. The operator $\Theta_{\varepsilon}:C\left(  0,T;\mathfrak{X}\right)
\rightarrow C\left(  0,T;\mathfrak{X}\right)  $ is continuous.

Assume that the sequence $\left\{  z_{n}\right\}  \subset C\left(
0,T;\mathfrak{X}\right)  $ such that $z_{n}\rightarrow z$ in $C\left(
0,T;\mathfrak{X}\right)  .$ Then the triangle inequality we have%
\begin{align*}
\left\Vert \left(  \Theta_{\varepsilon}z_{n}\right)  \left(  t\right)
-\left(  \Theta_{\varepsilon}z\right)  \left(  t\right)  \right\Vert  &
\leq\left\Vert \left(  \mathfrak{T}\left(  t,0;F\left(  z_{n}\right)  \right)
y_{0}-\mathfrak{T}\left(  t,0;F\left(  z\right)  \right)  \right)
y_{0}\right\Vert \\
&  +\int_{0}^{t}\left\Vert \mathfrak{T}\left(  t,s;F\left(  z_{n}\right)
\right)  \right\Vert \left\Vert B\right\Vert \left\Vert u_{\varepsilon}\left(
s,z_{n}\right)  -u_{\varepsilon}\left(  s,z\right)  \right\Vert ds\\
&  +\int_{0}^{t}\left\Vert \mathfrak{T}\left(  t,0;F\left(  z_{n}\right)
\right)  -\mathfrak{T}\left(  t,0;F\left(  z\right)  \right)  \right\Vert
\left\Vert B\right\Vert \left\Vert u_{\varepsilon}\left(  s,z\right)
\right\Vert ds\\
&  +\int_{0}^{t}\left\Vert \mathfrak{T}\left(  t,s;F\left(  z\right)  \right)
\right\Vert \left\Vert B\right\Vert \left\Vert g\left(  s,z_{n}\left(
s\right)  \right)  -g\left(  s,z\left(  s\right)  \right)  \right\Vert ds\\
&  +\int_{0}^{t}\left\Vert \mathfrak{T}\left(  t,0;F\left(  z_{n}\right)
\right)  -\mathfrak{T}\left(  t,0;F\left(  z\right)  \right)  \right\Vert
\left\Vert g\left(  s,z\left(  s\right)  \right)  \right\Vert ds.
\end{align*}
Now from the continuity of $u_{\varepsilon}\left(  s,\cdot\right)  ,g\left(
\cdot\right)  ,\mathfrak{T}\left(  t,s;F\left(  \cdot\right)  \right)  $ we
get the desired continuity of $\Theta_{\varepsilon}$.

Claim 3. The family of functions $\left\{  \Theta_{\varepsilon}z:z\in C\left(
0,T;\mathfrak{X}\right)  \right\}  $ is equicontinuous.

To see this we fix $t_{1}>0$ and let $t_{2}>t_{1}$ and $\eta>0$ be small
enough. Then%
\begin{gather*}
\left\Vert \left(  \Theta_{\varepsilon}z\right)  \left(  t_{2}\right)
-\left(  \Theta_{\varepsilon}z\right)  \left(  t_{1}\right)  \right\Vert
\leq\left\Vert \left(  \mathfrak{T}\left(  t_{2},0;F\left(  z_{n}\right)
\right)  -\mathfrak{T}\left(  t_{1},0;F\left(  z\right)  \right)  \right)
y_{0}\right\Vert \\
+\int_{0}^{t_{1}-\eta}\left\Vert \mathfrak{T}\left(  t_{2},s;F\left(
z\right)  \right)  -\mathfrak{T}\left(  t_{1},s;F\left(  z\right)  \right)
\right\Vert \left\Vert Bu_{\varepsilon}\left(  s,z\right)  +g\left(
s,z\left(  s\right)  \right)  \right\Vert ds\\
+\int_{t_{1}-\eta}^{t_{1}}\left\Vert \mathfrak{T}\left(  t_{2},s;F\left(
z\right)  \right)  -\mathfrak{T}\left(  t_{1},s;F\left(  z\right)  \right)
\right\Vert \left\Vert Bu_{\varepsilon}\left(  s,z\right)  +g\left(
s,z\left(  s\right)  \right)  \right\Vert ds\\
+\int_{t_{1}}^{t_{2}}\left\Vert \mathfrak{T}\left(  t_{2},s;F\left(  z\right)
\right)  \right\Vert \left\Vert Bu_{\varepsilon}\left(  s,z\right)  +g\left(
s,z\left(  s\right)  \right)  \right\Vert ds.
\end{gather*}
We know that compactness of the evolution family $\mathfrak{T}\left(
t,s\right)  ,t-s>0$ implies the uniform continuity of $\mathfrak{T}\left(
t,s\right)  ,t-s>0.$ Having in mind that
\[
\left\Vert Bu_{\varepsilon}\left(  s,z\right)  +g\left(  s,z\left(  s\right)
\right)  \right\Vert \leq M_{B}R_{\varepsilon}+\left\Vert m\right\Vert
_{C},\ \ \
\]
we see that $\left\Vert \left(  \Theta_{\varepsilon}z\right)  \left(
t_{2}\right)  -\left(  \Theta_{\varepsilon}z\right)  \left(  t_{1}\right)
\right\Vert $ tends to zero independently of $z\in C\left(  0,T;\mathfrak{X}%
\right)  $ as $t_{2}-t_{1}\rightarrow0.$ It can be easily shown that the
family $\left\{  \Theta_{\varepsilon}z:z\in C\left(  0,T;\mathfrak{X}\right)
\right\}  $ is equicontinuous at $t=0$. Hence $\left\{  \Theta_{\varepsilon
}z:z\in C\left(  0,T;\mathfrak{X}\right)  \right\}  $ is equicontinuous.

Claim 4. The set $V\left(  t\right)  =\left\{  \left(  \Theta_{\varepsilon
}z\right)  \left(  t\right)  :z\left(  \cdot\right)  \in C\left(
0,T;\mathfrak{X}\right)  \right\}  $ is relatively compact in $\mathfrak{X}$.

Obviously, $V\left(  0\right)  $ is relatively compact in $\mathfrak{X}$. Let
$0<t\leq T$ be fixed and $0<\delta<t$. For $z\left(  \cdot\right)  \in
C\left(  0,T;\mathfrak{X}\right)  $ we define%
\[
\left(  \Theta_{\varepsilon}z\right)  \left(  t\right)  =\mathfrak{T}\left(
t,0;F\left(  z\right)  \right)  x_{0}+\int_{0}^{t}\mathfrak{T}\left(
t,s;F\left(  z\right)  \right)  \left[  Bu_{\varepsilon}\left(  s,z\right)
+g\left(  s,z\left(  s\right)  \right)  \right]  ds
\]%
\begin{align*}
\left(  \Theta_{\varepsilon}^{\delta}z\right)  \left(  t\right)   &
=\mathfrak{T}\left(  t,0;F\left(  z\right)  \right)  x_{0}\\
&  +\mathfrak{T}\left(  t,t-\delta;F\left(  z\right)  \right)  \int
_{0}^{t-\delta}\mathfrak{T}\left(  t-\delta,s;F\left(  z\right)  \right)
\left[  Bu_{\varepsilon}\left(  s,z\right)  +g\left(  s,z\left(  s\right)
\right)  \right]  ds.
\end{align*}
Then from the compactness of $\mathfrak{T}\left(  t,t-\delta;F\left(
z\right)  \right)  ,\delta>0$ we obtain that%
\[
V^{\delta}\left(  t\right)  =\left\{  \left(  \Theta_{\varepsilon}^{\delta
}z\right)  \left(  t\right)  :z\left(  \cdot\right)  \in C\left(
0,T;\mathfrak{X}\right)  \right\}
\]
is relatively compact in $\mathfrak{X}$ for every $\delta,0<\delta<t$.
Moreover, for every $z\left(  \cdot\right)  \in C\left(  0,T;\mathfrak{X}%
\right)  $ we have%
\begin{align*}
&  \left\Vert \left(  \Theta_{\varepsilon}z\right)  \left(  t\right)  -\left(
\Theta_{\varepsilon}^{\delta}z\right)  \left(  t\right)  \right\Vert \\
&  \leq\int_{t-\delta}^{t}\left\Vert \mathfrak{T}\left(  t,s;F\left(
z\right)  \right)  \left[  Bu_{\varepsilon}\left(  s,z\right)  +g\left(
s,z\left(  s\right)  \right)  \right]  \right\Vert ds.
\end{align*}
Therefore, there are relatively compact sets arbitrarily close to the set
$V\left(  t\right)  $. Hence $V\left(  t\right)  $ is also relatively compact
in $\mathfrak{X}$.

Thus thanks to the Arzela-Ascoli theorem, the operator $\Theta_{\varepsilon}$
is a compact operator for any $\varepsilon>0$. Consequently, the operator
$\Theta_{\varepsilon}:C\left(  0,T;\mathfrak{X}\right)  \rightarrow C\left(
0,T;\mathfrak{X}\right)  $ is continuous and compact with uniformly bounded
image. By the Schauder fixed point theorem, the operator $\Theta_{\varepsilon
}$ has at least one fixed point in $C\left(  0,T;\mathfrak{X}\right)  $.
\end{proof}

\subsection{Finite-approximate controllability}

Assume that $y_{\varepsilon}^{\ast}\left(  \cdot\right)  $ is a fixed point of
$\Theta_{\varepsilon}.$ We will show that the fixed point $y_{\varepsilon
}^{\ast}\left(  \cdot\right)  $ and the corresponding control $u_{\varepsilon
}^{\ast}\left(  \cdot\right)  =u^{\ast}\left(  \cdot,y_{\varepsilon}^{\ast
}\right)  $ satisfies%
\[
\left\Vert y_{\varepsilon}^{\ast}\left(  T;y_{0},u_{\varepsilon}^{\ast
}\right)  -y_{f}\right\Vert <\varepsilon,\ \ \ \pi_{M}y_{\varepsilon}^{\ast
}\left(  T;y_{0},u_{\varepsilon}^{\ast}\right)  =\pi_{M}y_{f}\ \text{for any
}y_{f}\in\mathfrak{X.}%
\]
Define%

\begin{align*}
h\left(  y_{\varepsilon}^{\ast}\right)   &  =y_{f}-\mathfrak{T}\left(
T,0;F\left(  y_{\varepsilon}^{\ast}\right)  \right)  y_{0}-\int_{0}%
^{T}\mathfrak{T}\left(  T,s;F\left(  y_{\varepsilon}^{\ast}\right)  \right)
g\left(  s,y_{\varepsilon}^{\ast}\left(  s\right)  \right)  ds,\\
\varphi_{\varepsilon}  &  =\left(  \varepsilon\left(  I-\pi_{M}\right)
+\Gamma_{0}^{T}\left(  F\left(  y_{\varepsilon}^{\ast}\right)  \right)
\right)  ^{-1}\left(  h\left(  y_{\varepsilon}^{\ast}\right)  \right)  .
\end{align*}
Now we are ready to state and prove the main result on finite-approximate
controllability of semilinear evolution systems in this paper.

\begin{theorem}
\label{thm1}Let (S), (F), (G), and (AC) hold. Then the system (\ref{mp1}) is
finite-approximately controllable on $\left[  0,T\right]  $.
\end{theorem}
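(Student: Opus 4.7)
The plan is to use the fixed point $y_\varepsilon^* \in C([0,T];\mathfrak{X})$ of $\Theta_\varepsilon$ produced by Theorem \ref{thm:5}, together with the associated control $u_\varepsilon^*(\cdot) := u_\varepsilon(\cdot, y_\varepsilon^*)$, and to let $\varepsilon \downarrow 0^+$. First I would evaluate the fixed-point equation at $t = T$. Writing $\varphi_\varepsilon := (\varepsilon(I - \pi_M) + \Gamma_0^T(F(y_\varepsilon^*)))^{-1} h(y_\varepsilon^*)$, the definition of $u_\varepsilon^*$ gives $\int_0^T \mathfrak{T}(T, s; F(y_\varepsilon^*)) B u_\varepsilon^*(s)\, ds = \Gamma_0^T(F(y_\varepsilon^*)) \varphi_\varepsilon$, and the same telescoping that produced equation (\ref{f3}) in Theorem \ref{thm:4} yields the closed-form identity $y_\varepsilon^*(T) - y_f = -\varepsilon(I - \pi_M)\varphi_\varepsilon$. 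Applying $\pi_M$ to both sides already delivers the finite-dimensional exactness $\pi_M y_\varepsilon^*(T) = \pi_M y_f$ for every $\varepsilon > 0$, so only the norm decay $\|y_\varepsilon^*(T) - y_f\| \to 0$ remains.

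For the decay, I would work along an arbitrary sequence $\varepsilon_n \downarrow 0^+$. Because $\|F(\cdot, y_\varepsilon^*(\cdot))\|_{L(\mathfrak{X})} \leq L$ by (F), the operators $F_n := F(\cdot, y_{\varepsilon_n}^*(\cdot))$ are uniformly bounded in $L^2(0,T;L(\mathfrak{X}))$; combined with (G) and $\|\mathfrak{T}\| \leq M_\mathfrak{T}$, the vectors $h(y_{\varepsilon_n}^*)$ are uniformly bounded in $\mathfrak{X}$. Lemma \ref{lem5} then gives a subsequence (not relabeled) and $\widetilde F \in L^2(0,T;L(\mathfrak{X}))$ with $\Gamma_0^T(F_n) \to \Gamma_0^T(\widetilde F)$ strongly (via Lemma \ref{lem4}); exploiting the compactness of $\mathfrak{U}(t)$ inside $h(\cdot)$ as in the proof of Lemma \ref{lem3}, I may further assume $h(y_{\varepsilon_n}^*) \to h^*$ strongly. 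Assumption (AC) makes the linearization with coefficient $\widetilde F$ approximately controllable, so by Theorem \ref{thm:4}(iv), $\varepsilon_n(\varepsilon_n(I - \pi_M) + \Gamma_0^T(\widetilde F))^{-1} h^* \to 0$ in $\mathfrak{X}$.

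Finally I would split $\varepsilon_n(I - \pi_M)\varphi_{\varepsilon_n}$ as the sum of three pieces: a coefficient-perturbation piece handled through the resolvent identity $A^{-1} - B^{-1} = A^{-1}(B - A) B^{-1}$ with $A = \varepsilon_n(I - \pi_M) + \Gamma_0^T(F_n)$ and $B = \varepsilon_n(I - \pi_M) + \Gamma_0^T(\widetilde F)$, combined with the strong convergence of Lemma \ref{lem4}; a target-perturbation piece bounded via $\|h(y_{\varepsilon_n}^*) - h^*\|$; and the clean term $\varepsilon_n(I - \pi_M)(\varepsilon_n(I - \pi_M) + \Gamma_0^T(\widetilde F))^{-1} h^*$, which vanishes by Theorem \ref{thm:4}(iv). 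The main obstacle is that the naive bound $\|(\varepsilon(I - \pi_M) + \Gamma_0^T(\cdot))^{-1}\| \sim 1/\varepsilon$ would destroy the first two pieces; the way around it is to observe, via Theorem \ref{thm:1} applied with $\Gamma(\varepsilon_n) = \Gamma_0^T(F_n)$ and $\Gamma = \Gamma_0^T(\widetilde F)$ (positive by (AC)), that $\|\varepsilon_n(\varepsilon_n I + \Gamma_0^T(F_n))^{-1}\pi_M\| \to 0$ along the subsequence, so that formula (\ref{f2}) delivers a uniform-in-$n$ bound on $\|\varepsilon_n(\varepsilon_n(I - \pi_M) + \Gamma_0^T(F_n))^{-1}\|$. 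Summing the three pieces gives $\|y_{\varepsilon_n}^*(T) - y_f\| \to 0$, and since $\{\varepsilon_n\}$ was arbitrary, (\ref{mp1}) is finite-approximately controllable on $[0,T]$.
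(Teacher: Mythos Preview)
Your proposal is correct and follows essentially the same route as the paper: derive the identity $y_\varepsilon^*(T)-y_f=-\varepsilon(I-\pi_M)\varphi_\varepsilon$, pass to a subsequence to extract a limiting coefficient $\widetilde F$, and control the three-term splitting via the uniform bound on $\varepsilon(\varepsilon(I-\pi_M)+\Gamma_0^T(F_n))^{-1}$, Lemma~\ref{lem5}, and Theorem~\ref{thm:4}(iv). The only presentational differences are that the paper makes the convergence $h(y_\varepsilon^*)\to\widetilde h$ explicit through Dunford--Pettis and a Gronwall estimate (rather than your appeal to compactness ``as in Lemma~\ref{lem3}''), while you are more explicit than the paper in invoking Theorem~\ref{thm:1} to justify the uniform-in-$n$ bound that the paper simply asserts.
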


\begin{proof}
Let $y_{\varepsilon}^{\ast}\in C\left(  0,T;\mathfrak{X}\right)  $ be a fixed
point of $\Theta_{\varepsilon}.$ Then we have
\[
y_{\varepsilon}^{\ast}\left(  T\right)  -y_{f}=-\varepsilon\left(  I-\pi
_{M}\right)  \varphi_{\varepsilon}=-\varepsilon\left(  I-\pi_{M}\right)
\left(  \varepsilon\left(  I-\pi_{M}\right)  +\Gamma_{0}^{T}\left(  F\left(
y_{\varepsilon}^{\ast}\right)  \right)  \right)  ^{-1}\left(  h\left(
y_{\varepsilon}^{\ast}\right)  \right)  .
\]
By the assumption (A2), $\left\Vert F\left(  s,y_{\varepsilon}^{\ast}\left(
s\right)  \right)  \right\Vert _{L\left(  \mathfrak{X}\right)  }\leq L.$ Then
by Lemma \ref{lem3} there exists $\widetilde{F}\in L^{2}\left(  0,T;L\left(
\mathfrak{X}\right)  \right)  $ such that
\[
\mathfrak{T}\left(  T,0;F\left(  y_{\varepsilon}^{\ast}\right)  \right)
y_{0}\rightarrow\mathfrak{T}\left(  T,0;\widetilde{F}\right)  y_{0}.
\]
On the other hand by the assumption (A3), $\left\Vert g\left(
s,y_{\varepsilon}^{\ast}\left(  s\right)  \right)  \right\Vert \leq m\left(
s\right)  $, and Dunford-Pettis Theorem, we have that the sequence $\left\{
g\left(  s,y_{\varepsilon}^{\ast}\left(  s\right)  \right)  \right\}  $ is
weakly compact in $L^{2}\left(  0,T;\mathfrak{X}\right)  $, so there is a
subsequence, still denoted by $\left\{  g\left(  s,y_{\varepsilon}^{\ast
}\left(  s\right)  \right)  \right\}  $ that weakly converges to, say, $g$ in
$L^{2}\left(  0,T;\mathfrak{X}\right)  $. Denote
\begin{align*}
\widetilde{h}  &  =y_{f}-\mathfrak{T}\left(  T,0;\widetilde{F}\right)
y_{0}-\int_{0}^{T}\mathfrak{T}\left(  T,s;\widetilde{F}\right)  g\left(
s\right)  ds,\\
\varphi_{\varepsilon}\left(  t,s\right)   &  =\mathfrak{T}\left(  t,s;F\left(
y_{\varepsilon}^{\ast}\right)  \right)  g\left(  s,y_{\varepsilon}^{\ast
}\left(  s\right)  \right)  -\mathfrak{T}\left(  t,s;\widetilde{F}\right)
g\left(  s\right)  ,\\
\psi_{\varepsilon}\left(  t,s\right)   &  =\int_{s}^{t}\mathfrak{U}\left(
t-r\right)  \left[  F\left(  r,y_{\varepsilon}^{\ast}\left(  r\right)
\right)  -\widetilde{F}\left(  r\right)  \right]  \mathfrak{T}\left(
r,s;\widetilde{F}\left(  r\right)  \right)  g\left(  s\right)  dr.
\end{align*}
Then by the definition of $\mathfrak{T}$ and the Gronwall inequality we have%
\begin{align*}
\varphi_{\varepsilon}\left(  t,s\right)   &  =\psi_{\varepsilon}\left(
t,s\right)  +\int_{s}^{t}\mathfrak{U}\left(  t-r\right)  F\left(
r,y_{\varepsilon}^{\ast}\left(  r\right)  \right)  \varphi\left(  r\right)
dr\\
\left\Vert \varphi_{\varepsilon}\left(  t,s\right)  \right\Vert  &
\leq\left\Vert \psi_{\varepsilon}\left(  t,s\right)  \right\Vert +M\int
_{s}^{t}\left\Vert \varphi_{\varepsilon}\left(  r,s\right)  \right\Vert
dr\Longrightarrow\\
\left\Vert \varphi_{\varepsilon}\left(  t,s\right)  \right\Vert  &
\leq\left\Vert \psi_{\varepsilon}\left(  t,s\right)  \right\Vert +M\int
_{s}^{t}\left\Vert \psi_{\varepsilon}\left(  r,s\right)  \right\Vert dr.
\end{align*}
It is clear that
\[
\psi_{\varepsilon}\left(  r,s\right)  \text{ is uniformly bounded and
}\left\Vert \psi_{\varepsilon}\left(  r,s\right)  \right\Vert \rightarrow0.\
\]
Then%
\[
\varphi_{\varepsilon}\left(  T,s\right)  =\mathfrak{T}\left(  T,s;F\left(
y_{\varepsilon}^{\ast}\right)  \right)  g\left(  s,y_{\varepsilon}^{\ast
}\left(  s\right)  \right)  -\mathfrak{T}\left(  T,s;\widetilde{F}\right)
g\left(  s\right)  \rightarrow0\ \text{as }\varepsilon\rightarrow0^{+}.
\]
So%
\begin{align}
\left\Vert h\left(  y_{\varepsilon}^{\ast}\right)  -\widetilde{h}\right\Vert
&  \leq\left\Vert \mathfrak{T}\left(  T,0;F\left(  y_{\varepsilon}^{\ast
}\right)  \right)  y_{0}-\mathfrak{T}\left(  T,0;\widetilde{F}\right)
y_{0}\right\Vert \nonumber\\
&  +\int_{0}^{T}\left\Vert \mathfrak{T}\left(  T,s;F\left(  y_{\varepsilon
}^{\ast}\right)  \right)  g\left(  s,y_{\varepsilon}^{\ast}\left(  s\right)
\right)  -\mathfrak{T}\left(  T,s;\widetilde{F}\right)  g\left(  s\right)
\right\Vert ds\nonumber\\
&  \rightarrow0\ \ \ \text{as\ \ }\varepsilon\rightarrow0^{+}. \label{ww3}%
\end{align}
To prove the strong convergence recall that
\begin{gather}
\left\Vert y_{\varepsilon}^{\ast}\left(  T\right)  -y_{f}\right\Vert
\leq\varepsilon\left\Vert \left(  I-\pi_{M}\right)  \left(  \varepsilon\left(
I-\pi_{M}\right)  +\Gamma_{0}^{T}\left(  F\left(  y_{\varepsilon}^{\ast
}\right)  \right)  \right)  ^{-1}\left(  h\left(  y_{\varepsilon}^{\ast
}\right)  -\widetilde{h}\right)  \right\Vert \nonumber\\
+\varepsilon\left\Vert \left(  I-\pi_{M}\right)  \left[  \left(
\varepsilon\left(  I-\pi_{M}\right)  +\Gamma_{0}^{T}\left(  F\left(
y_{\varepsilon}^{\ast}\right)  \right)  \right)  ^{-1}-\left(  \varepsilon
\left(  I-\pi_{M}\right)  +\Gamma_{0}^{T}\left(  \widetilde{F}\right)
\right)  ^{-1}\right]  \left(  \widetilde{h}\right)  \right\Vert
\label{ww11}\\
+\varepsilon\left\Vert \left(  I-\pi_{M}\right)  \left(  \varepsilon\left(
I-\pi_{M}\right)  +\Gamma_{0}^{T}\left(  \widetilde{F}\right)  \right)
^{-1}\left(  \widetilde{h}\right)  \right\Vert .\nonumber
\end{gather}
By (\ref{ww3}) and the uniform boundedness of $\left\Vert \varepsilon\left(
\varepsilon\left(  I-\pi_{M}\right)  +\Gamma_{0}^{T}\left(  F\left(
y_{\varepsilon}^{\ast}\right)  \right)  \right)  ^{-1}\right\Vert ,$ the first
term goes to zero. The second term approaches zero thanks to Lemma \ref{lem5}.
The last converges to zero according to Theorem \ref{thm:4}. Thus taking limit
in (\ref{ww11}) we complete the proof.
\end{proof}

\begin{remark}
If in (\ref{mp1}) $f=0$ we get the finite-approximate controllability of
semilinear system with bounded nonlinear term (cf \cite{mah3}). If $g=0$ we
get the finite-approximate controllability of semilinear system with the
Lipschitz nonlinear term. So the result is new even for the case $g=0$.
\end{remark}

\section{ Applications}

\textit{Example 1. } Consider the partial differential system of the form%
\begin{align}
\dfrac{\partial}{\partial t}y\left(  t,\theta\right)   &  =\dfrac{\partial
^{2}}{\partial\theta^{2}}y\left(  t,\theta\right)  +m\left(  \theta\right)
u\left(  t,\theta\right)  +f\left(  y\left(  t,\theta\right)  \right)
+g\left(  y\left(  t,\theta\right)  \right)  ,\left(  t,\theta\right)
\in\left(  0,T\right)  \times\left(  0,\pi\right) \nonumber\\
y\left(  t,\theta\right)   &  =0,\;\left(  0,T\right)  \times\left\{
0,\pi\right\}  ,\label{ap}\\
y\left(  0,\theta\right)   &  =y_{0}\left(  \theta\right)  ,\ \theta\in\left[
0,\pi\right]  ,\ \ y_{0}\in L^{2}\left[  0,\pi\right]  ,\nonumber
\end{align}
$m$ is the characteristic function of an open subset $\omega\subset\left[
0,\pi\right]  .$ We assume that $f\in C^{1}(R)$ and $\left\vert f^{\prime
}\left(  r\right)  \right\vert \leq L$ for all $r\in R.$ So $f$ is globally
Lipschitz. Moreover assume that $g\in C(R)$ and $\left\vert g\left(  r\right)
\right\vert \leq M$ for all $r\in R.$

To write the system (\ref{ap}) in a semigroup form define $\mathfrak{X}%
=U=L^{2}\left[  0,\pi\right]  $ and $A:D\left(  A\right)  \subset
\mathfrak{X}\rightarrow\mathfrak{X}$ to be $Ay=y^{\prime\prime},$ where
$D\left(  A\right)  =H_{0}^{1}\left[  0,\pi\right]  \cap H^{2}\left[
0,\pi\right]  .$ We also define the operators $F,G:X\rightarrow X$ by $\left(
Fy\right)  \left(  \theta\right)  =f\left(  y\left(  \theta\right)  \right)
,\ $ $\left(  Gy\right)  \left(  \theta\right)  =g\left(  y\left(
\theta\right)  \right)  $for almost every $\theta\in\left[  0,\pi\right]  $
and the bounded linear control operator $B:\mathfrak{X}\rightarrow
\mathfrak{X}$ by $\left(  Bu\right)  \left(  \theta\right)  =m\left(
\theta\right)  u\left(  t,\theta\right)  $ for almost every $\theta\in\left[
0,\pi\right]  .$

Taking into account all these notations, the state system (\ref{ap}) becomes%
\begin{align*}
y^{\prime}+Ay  &  =Bu+F\left(  y\right)  +G\left(  y\right)  \ \ \text{on
}\left(  0,T\right) \\
y\left(  0\right)   &  =y_{0}.
\end{align*}
We know that $A$ generates a compact $C_{0}$-semigroup, $F$ is globally
Lipschitz on $\mathfrak{X}$ and $B$ is bounded. So, for each $u\in
L^{2}\left(  0,T;\mathfrak{X}\right)  $ and $y_{0}\in\mathfrak{X},$ $\left(
\ref{ap}\right)  $ has a unique mild solution $y\in C\left(  0,T;X\right)  $.

It is known that the linearized system associated with (\ref{ap}) is
finite-approximately controllable, see \cite{zuazua1}. Thus by Theorem
\ref{thm1} the system (\ref{ap}) is finite-approximately controllable on
$\left[  0,T\right]  $.

\textit{Example 2}. We consider a system governed by the semilinear heat
equation with lumped control%
\begin{equation}
\left\{
\begin{array}
[c]{c}%
\dfrac{\partial x\left(  t,\theta\right)  }{\partial t}=\dfrac{\partial
^{2}x\left(  t,\theta\right)  }{\partial\theta^{2}}+\chi_{\left(  \alpha
_{1},\alpha_{2}\right)  }\left(  \theta\right)  u\left(  t\right)  +g\left(
x\left(  t,\theta\right)  \right)  ,\\
x\left(  t,0\right)  =x\left(  t,\pi\right)  =0,\ \ \ \ 0<t<T,\\
x\left(  0,\theta\right)  =x_{0}\left(  \theta\right)  ,\ \ \ \ 0\leq
\theta\leq\pi,
\end{array}
\right.  \label{exp1}%
\end{equation}
where $\chi_{\left(  \alpha_{1},\alpha_{2}\right)  }\left(  \theta\right)  $
is the characteristic function of $\left(  \alpha_{1},\alpha_{2}\right)
\subset\left(  0,\pi\right)  $. Let $\mathfrak{X}=L^{2}\left[  0,\pi\right]
$, $U=R$, and $A=d^{2}/d\theta^{2}$ with $D\left(  A\right)  =H_{0}^{1}\left[
0,\pi\right]  \cap H^{2}\left[  0,\pi\right]  .$ We define the bounded linear
operator $B:R\rightarrow L^{2}\left[  0,\pi\right]  $ by $(Bu)(t)=\chi
_{\left(  \alpha_{1},\alpha_{2}\right)  }\left(  \theta\right)  u\left(
t\right)  $, and the nonlinear operator $g$ is assumed to satisfy (G).

If $\alpha_{1}\pm\alpha_{2}$ is an irrational number, then the linear system
corresponding to (\ref{exp1}) is finite-approximately controllable (see
\cite{khapalov}), and by Theorem \ref{thm1}, the system (\ref{exp1}) is
finite-approximately controllable on $\left[  0,T\right]  $, provided that
condition (G) is satisfied.

\textit{Example 3.} Consider the following initial-boundary value problem of
parabolic control system
\begin{equation}
\left\{
\begin{tabular}
[c]{lll}%
$\dfrac{\partial}{\partial t}y\left(  t,\theta\right)  =\dfrac{\partial^{2}%
}{\partial\theta^{2}}y\left(  t,\theta\right)  +Bu\left(  t,\theta\right)
+g\left(  t,y\left(  t,\theta\right)  \right)  ,$ & $t\in\left[  0,1\right]
,\theta\in\left[  0,\pi\right]  ,$ & \\
$y\left(  t,0\right)  =y\left(  t,\pi\right)  =0,$ & $t\in\left[  0,1\right]
,$ & \\
$y\left(  0,\theta\right)  =y_{0}\left(  \theta\right)  ,$ & $t\in\left[
0,1\right]  ,\theta\in\left[  0,\pi\right]  .$ &
\end{tabular}
\ \ \ \right.  \label{ex2}%
\end{equation}
Take $\mathfrak{X}=U=L^{2}\left[  0,\pi\right]  $ and the operator $A:D\left(
A\right)  \subset\mathfrak{X}\rightarrow\mathfrak{X}$ is defined as in
\textit{Example 1}. Define the operator $B$ as follows%
\[
Bu\left(  t\right)  =\sum_{n=1}^{\infty}\overline{u}_{n}\left(  t\right)
e_{n},
\]
where%
\begin{align*}
u\left(  t\right)   &  =\sum_{n=1}^{\infty}\left\langle u\left(  t\right)
,e_{n}\right\rangle e_{n},\\
\overline{u}_{n}\left(  t\right)   &  =\left\{
\begin{tabular}
[c]{ll}%
$0,$ & $0\leq t<1-\dfrac{1}{n^{2}},$\\
$\left\langle u\left(  t\right)  ,e_{n}\right\rangle ,$ & $1-\dfrac{1}{n^{2}%
}\leq t\leq1,$%
\end{tabular}
\ \ \ \right.
\end{align*}
then, one can easily obtain that $\left\Vert Bu\right\Vert \leq\left\Vert
u\right\Vert ,$ which implies that $B$ is bounded. It is known that the linear
system corresponding to (\ref{ex2}) is approximately controllable. By Theorem
\ref{thm1}, the system (\ref{ex2}) is finite-approximately controllable on
$\left[  0,T\right]  $, provided that condition (G) is satisfied.

\bigskip

\end{document}